\documentclass[a4paper,12pt,twoside,reqno]{amsart}
\usepackage{latexsym,amsmath,amsthm,verbatim,ifthen,amssymb,graphicx,color,mathrsfs}
\usepackage[all]{xy}
\usepackage{color}
\usepackage{graphicx}
\usepackage{wrapfig}

\SelectTips{cm}{}
\setcounter{tocdepth}{1}
\setcounter{secnumdepth}{1}
\newtheorem{theorem}{Theorem}[section]

\newtheorem{corollary}[theorem]{Corollary}
 \newtheorem{lemma}[theorem]{Lemma}
 \newtheorem{proposition}[theorem]{Proposition}
 \theoremstyle{definition}
 \newtheorem{definition}[theorem]{Definition}
 \theoremstyle{remark}
 \newtheorem{remark}[theorem]{Remark}
 \newtheorem{example}[theorem]{Example}



\newcommand{\bq}{\mathbb Q}


\def\Q{\mathbb{Q}}
\def\R{\mathbb{R}}
\def\bc{\mathbb{C}}

\def\im{{\rm Im\,}}
\def\nil{{\rm nil\,}}

\def\id{{\rm id}}

\newcommand{\secat}{\operatorname{{\text{\rm secat}}}}
\newcommand{\tc}{\operatorname{{\text{\rm TC}}}}
\newcommand{\tcn}{\operatorname{{\text{\rm tc}}}}
\newcommand{\cat}{\operatorname{{\text{\rm cat}}}}



%

   \title{Topological complexity of the work map}
%

\author{Aniceto Murillo}
\address{Departamento de Algebra, Geometr\'ia y Topolog\'ia\\
         Universidad de M\'alaga\\
        Ap. 59, 29080 M\'alaga,\\
         Espa\~na}
\email{aniceto@uma.es}

\author{Jie Wu}
\address{Department of Mathematics\\ National University of Singapore\\
 Block S17  Lower Kent Ridge Road \\ 119076 Singapore }
\email{matwuj@nus.edu.sg}

\thanks{This research was partially supported by the Singapore Ministry of Education research grant (AcRF Tier 1 WBS No. R-146-000-222-112), a grant (No.11329101) of NSFC and by the Spanish MINECO research grants MTM2013-41768, MTM2016-78647.  The first author would like to thank the Department of Mathematics of the NUS  for his warm hospitality.}

\begin{document}

\date{\today}

\begin{abstract}
We introduce the topological complexity of the work map associated to a robot system. In broad terms, this measures the complexity of any algorithm controlling, not just the motion of the configuration space of the given system, but the task for which the system has been designed. From a purely topological point of view, this is a homotopy invariant of a map which generalizes the classical topological complexity of a  space.
\end{abstract}

\maketitle

\section*{Introduction}

The theory of topological complexity was initiated by Michael Farber~\cite{far,far2004} and it has become one of most active field in the area of applied topology during last decade. In broad terms, this theory measures the complexity of any algorithm controlling the motion planing on a given configuration space. One can also regard this invariant as the minimum number of navigational instabilities of such a motion planning.

However, in many situations in robotics, more important than controlling the motion on a given configuration space, it  is designing the resulting motion on the corresponding workspace. We briefly support this assertion by  looking at two different key examples:

\smallskip

In robotics, see for instance \cite[Chap.~1]{craig}, a {\em robot manipulator} or {\em robot arm}   consists of multiple rigid segments (\textit{sub-arms}, \textit{links}) where successive, neighboring links are connected by {\em joints} of different kind (rotational, translational, polar, cylindrical).  The \textit{base} of a robot manipulator is the end of the first link  which is fixed to a point through a given joint. The {\em end effector} is the device (screw driver, welding device,...) at the end of the last link of the robotic arm, designed to interact with the environment performing the proposed task.  Finally, The {\em workspace} of a robot arm is defined as the set of points that can be reached by the end effector.

From the topological point of view, the configuration space of a robot arm, i.e., the set of all possible states of such a manipulator, was first modeled in \cite{Gottlieb}, see also \cite{Pfalzgraf} or the modern  reference \cite{far3}.
Less attention has been given to the topological study of the workspace as this  is not an (even diffeomorphic) invariant of the configuration space.
For instance, consider two robot manipulators $C$ and $D$, each of which consisting of two links of lengths $\ell_1,\ell_2$ with $\ell_1>\ell_2$ and both with one degree of freedom. In other words, both $C$ and $D$ are diffeomorphic to $S^1\times S^1$.

\begin{wrapfigure}{r}{0.35\textwidth}
  \vspace{-26pt}
  \begin{center}
    \includegraphics[height=62mm]{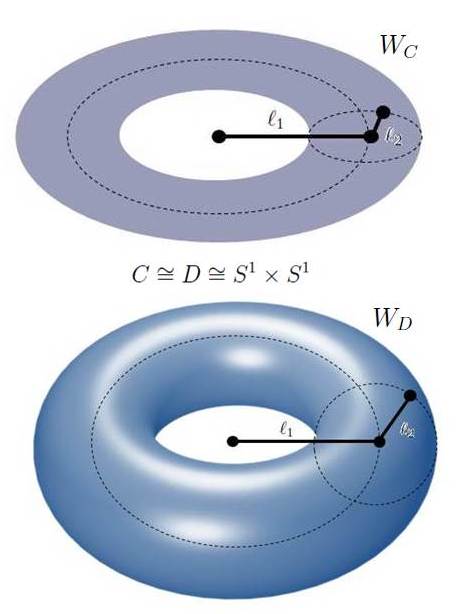}
  \end{center}
  \vspace{-15pt}
\end{wrapfigure}

However, if we assume $C$ to be ``planar'', then its workspace $W_C$ is an annulus of radius $\ell_1-\ell_2$ \cite[\S1.2]{far3}. On the other hand, if in $D$, the circle generated by the end effector link is ``transversal'' to the plane containing the circle generated by the based link, then $W_D$ is a torus. This is discussed in detail in Example 2.6.

Nevertheless, both the configuration space $C$ and the workspace $W$ of a given robot arm are connected by the continuous  {\em work map}
$$
f\colon C\longrightarrow W
$$
which assigns to each state of the configuration space the position of the end effector at that state.

This map is an important object to be considered when  implementing  algorithms controlling the task performed by the robot manipulator. Indeed,
the input of such an algorithm  are  pairs $(a,b)\in W\times W$ of points of the workspace, that is, pairs of possible positions of the end effector. The output for such a pair ought to be a curve in the configuration space $\alpha\in C^I$ such that $f\big(\alpha(0)\bigr)=a$ and $f\big(\alpha(1)\bigr)=b$, where $C^I$ is the space of curves in $C$  (that is the space of continuous maps $I=[0,1]\to C$).

One may argue that a motion planner of the configuration space $C$ produces such an algorithm by composing with the work map. However, the efficiency of such an algorithm might not be optimal as, for instance,  the work map is not injective in general and therefore,
many states of the configuration space may give the same position of the end effector.

\smallskip

The second example to which the above can be applied is the following: according to~\cite{Bajd}, a {\em multi-robot system} consists of two or more robots executing a task requiring collaboration among them. Assume that such a multi-robot system is formed by  $n$ autonomous mobile robots running in a space $X$ without colliding. The configuration space of such a system is the standard $n$-th configuration space
 $$F(X,n)=\{(x_1,\ldots,x_n)\in X^{n} \ | \ x_i\not=x_j\textrm{ for } i\not=j\}$$
  with the subspace topology of the $n$-fold Cartesian product of $X$. On the other hand, the task requiring the collaboration of the robots can be described as a continuous {\em work map}
   $$
   f\colon F(X,n)\longrightarrow Y,
   $$
   depending on  the locations of the robots, and with values on the {\em workspace} $Y$ which is often described as a subspace of some Euclidean space $\mathbb{R}^N$. Hence, an algorithm controlling the task performed by the multi-robot system can be described, as before, in terms of the work map.

\medskip

These examples motivate the main purpose of this article which is to propose the notion of {\em topological complexity of a (work) map} together with its {\em naive}\footnote{The notion of naive topological complexity was suggested by Shmuel Weinberger.} or {\em strict} version as a generalization of the topological complexity of a given configuration space\footnote{Finding an appropriate notion of topological complexity of a map was a problem raised by A. Dranishnikov on his course at the event {\em ``Applied Algebraic Topology, Advanced Courses''} held at the CIEM, Castro Urdiales, Spain, in 2014.}.

 \begin{definition}\label{tcprime} Given a continuous map $f\colon X\to Y$, the {\em topological complexity of $f$}, $\tc(f)$, is the least integer $n\le \infty$ such that $X\times X$ can be covered by $n+1$ open sets $\{U_i\}_{i=0}^n$ on each of which there is a continuous map $\sigma_i\colon U_i\to X^I$ satisfying $$(f\times f)\circ \pi\circ \sigma_i\simeq (f\times f)_{|_{U_i}},$$  where $\pi\colon X^I\to X\times X$ is the path fibration, $\pi(\alpha)=\bigl(\alpha(0),\alpha(1)\bigr)$. The {\em naive} or {\em strict topological complexity of $f$}, $\tcn(f)$ is defined analogously requiring each of the maps $\sigma_i\colon U_i\to X^I$ to satisfy the stronger condition
  $$(f\times f)\circ \pi\circ \sigma_i= (f\times f)_{|_{U_i}}.$$
 \end{definition}

Arising from practical concern, one may want $(f\times f)\circ \pi\circ \sigma_i= (f\times f)_{|_{U_i}}$ rather than homotopic in the above definition. Hence we also introduce the notion of naive or strict topological complexity of a map.

The non strict notion is an appropriate extension of the usual topological complextiy from a homotopy point of view. However, controlling the work map only up to homotopy make the results harder to interpret in terms of actual motion plannings. The  strict version takes care of this gap.

 We point out that our approach is different from the interesting work of P. Pavesic \cite{pa1,pa2} where, roughly speaking, the topological complexity of a map $f\colon X\to Y$ is defined as the Svarc genus of the map $\gamma\colon X^I\to X\times Y$, $\gamma(\alpha)=\bigl(\alpha(0),f(\alpha(1))\bigr)$, which assigns to each curve joining two given states of the configuration space, the initial state and the end effector postion at the final state.

\smallskip

 We first introduce and study in Section 1, for a given pair of maps $E\stackrel{p}{\to}B\stackrel{g}{\to}X$, the $g$-sectional category of $p$, $\secat_g(p)$,  a generalization of the Svarc genus or sectional category. Then, the topological complexity of a map $f\colon X\to Y$ can be thought of as $\secat_{f\times f}(\pi)$. As such, we show in Section 2 that $\tc(f)$ is a an invariant of the homotopy type of $f$ and is $0$ if and only if $f$ is inessential. Moreover, $\tc(X)=\tc(\id_X)$. We also provide upper and lower bounds for the topological complexity of a map, see propositions \ref{cotas} and \ref{cotasco}:
 $$
\max\{\cat(f),\,\nil \ker \cup_{|_\im (f\times f)^*}\} \le \tc(f) \le \min\{\tc(X),\,\cat(f\times f)\}.
$$
 This indicates in particular that the complexity of an algorithm controlling the task performed by a system is in general smaller than the one controlling just the motion of the system. We finish the section with several examples.

 In Section 3 we give a characterization of the topological complexity of the rationalization $f_\bq$ of a given map $f$ between simply connected spaces in terms of its Sullivan models. This is highly computable in algebraic terms and becomes a lower bound of the topological complexity of $f$ as $\tc(f_\bq)\le\tc (f)$. As an application, we show that the topological complexity of a formal map always coincides with its cohomological lower bound.

In Section 4 we present some properties of the naive topological complexity of a map which is in general a rough upper bound of the non naive version and it coincides with it for  fibration. We discuss the particular  case of  an articulated arm whose links have variable length.

  Finally, we would like to stress that our  purpose is not being exhaustive in the study of the the topological complexity of a map, but just laying the groundwork for its further development and presenting the general behaviour of this new invariant.

\section{$f$-Sectional Category}

 In what follows, and unless explicitly stated otherwise, a topological space will always be pointed, path-connected, and of the homotopy type of a CW-complex. Continuous maps are assumed to preserve base points.

We recall the definition of the most classical Lusternik-Schnirelmann invariants. The category of a space $X$, $\cat(X)$, is the least $n\le \infty$ such that $X$ can be covered by $n+1$ open sets contractible within $X$. On the other hand, the category of a map $f$,  $\cat(f)$, is the least $n\le\infty$ such that the domain can be covered by $n+1$ open sets on each of which the restriction of $f$ is homotopically trivial. Finally,
given a map $p\colon E\to B$, the {\em sectional category of $p$} \cite{s} denoted by $\secat(p)$ , is the least $n\le\infty$ for which $B$ can be covered by $n+1$ open sets on each of which there is a local homotopy section of $p$. Here we extend this invariant.

\begin{definition} Let $E\stackrel{p}{\to}B\stackrel{f}{\to}X$ be two continuous maps. An open set $U\subset B$ is {\em $f$-categorical} if there is a map $s\colon U\to E$ such that $fps\simeq f_{|_U}$. We call $s$ an {\em $f$-section}. The {\em $f$-sectional category of $p$}, $\secat_f( p)$, is the least  $n\le \infty$ for which $B$ admits a covering of $n+1$ $f$-categorical open sets.
\end{definition}

Obviously, if $f\simeq g$ and $p\simeq q$, then $\secat_f(p)=\secat_g(q)$. Also, observe that $\secat(p)=\secat_{\id_B}(p)$.

Recall that $p\colon E\to B$ is said to be dominated by $p'\colon E'\to B'$ if there is a (homotopy) commutative diagram,
$$
\xymatrix{E \ar[r]^i\ar[d]_p & E'\ar[r]^r\ar[d]_{p'}&E\ar[d]_p\\
B \ar[r]_j & B'\ar[r]_t&B}
$$
such that $ri\simeq \id_E$ and $tj\simeq \id_B$.

\begin{lemma} Let $p\colon E\to B$ be dominated by $p'\colon E'\to B'$ and let $f\colon B\to X$ be any map. Then, $\secat_{ft}(p')\le \secat_f(p)$.
\end{lemma}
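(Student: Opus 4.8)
We are given that $p\colon E\to B$ is dominated by $p'\colon E'\to B'$ via a diagram
$$
\xymatrix{E \ar[r]^i\ar[d]_p & E'\ar[r]^r\ar[d]_{p'}&E\ar[d]_p\\
B \ar[r]_j & B'\ar[r]_t&B}
$$
with $ri\simeq\id_E$, $tj\simeq\id_B$, $p'i\simeq jp$ and $pr\simeq tp'$, together with a map $f\colon B\to X$. We want to show $\secat_{ft}(p')\le\secat_f(p)$. Let me think about what this requires.
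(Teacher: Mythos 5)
Your proposal stops exactly where the proof has to begin: after restating the domination diagram and the goal, no argument is given. There is no construction of $ft$-categorical open sets in $B'$, no candidate $ft$-sections, and no verification of the required homotopy, so as it stands this is not a proof but a restatement of the hypotheses.

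The missing content is short but essential. Given an $f$-categorical open set $U\subset B$ with an $f$-section $s\colon U\to E$ (so $fps\simeq f_{|_U}$), the natural move is to pull back along $t$: set $V=t^{-1}(U)\subset B'$, which is open, and define $s'=ist_{|_V}\colon V\to E'$. Then, using $p'i\simeq jp$, $tj\simeq \id_B$ and $fps\simeq f_{|_U}$, one computes
$$
ftp's' = ftp'ist \simeq ftjpst \simeq fpst \simeq ft_{|_V},
$$
so $V$ is $ft$-categorical for $p'$. Finally, if $\{U_k\}_{k=0}^n$ is a covering of $B$ by $f$-categorical open sets, the sets $\{t^{-1}(U_k)\}_{k=0}^n$ cover $B'$ (since $t$ is defined on all of $B'$) and are $ft$-categorical, giving $\secat_{ft}(p')\le \secat_f(p)$. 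This is precisely the paper's argument; until you supply the choice of $V$ and $s'$ and the chain of homotopies above, the claim is unproved.
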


\begin{proof}

 Let $U\subset B$ be $f$-categorical and let $s\colon U\to E$ with $fps\simeq f_{|_U}$. Let $V=t^{-1}(U)$ and $s'=ist_{|_V}\colon V\to E'$. Then,
$ftp's'=ftp'ist\simeq ftjpst\simeq fpst\simeq ft$, that is, $V$ is $ft$-categorical for $p'$.
\end{proof}
We write $p\sim p'$ is there is a homotopy commutative square,
$$
\xymatrix{E \ar[r]^g_\simeq\ar[d]_p & E'\ar[d]^{p'}\\
B \ar[r]_h^\simeq & B'}
$$
in which the horizontal arrows are homotopy equivalences. An immediate consequence of the lemma above is:

\begin{proposition}\label{nodepende} Let $p\sim p'$ and $f\colon B'\to X$. then $\secat_{hf}(p)=\secat_f(p')$.\hfill$\square$
\end{proposition}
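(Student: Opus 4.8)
The plan is to deduce Proposition~\ref{nodepende} from the preceding lemma by exploiting the fact that a homotopy equivalence is, in particular, a domination in both directions. Concretely, given $p\sim p'$ via the square with horizontal homotopy equivalences $g\colon E\to E'$ and $h\colon B\to B'$, I would fix homotopy inverses $g^{-1}\colon E'\to E$ and $h^{-1}\colon B'\to B$. The square $p'g\simeq hp$ together with its mirror image (using the inverses) produces exactly the kind of domination diagram appearing just before the lemma: $p$ is dominated by $p'$, with $i=g$, $r=g^{-1}$, $j=h$, $t=h^{-1}$, since $g^{-1}g\simeq\id_E$ and $h^{-1}h\simeq\id_B$; and symmetrically $p'$ is dominated by $p$ with the roles of the inverses swapped, so $i=g^{-1}$, $r=g$, $j=h^{-1}$, $t=h$.

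Having recorded that, I would apply the lemma twice. For a map $f\colon B'\to X$: applying the lemma to ``$p$ is dominated by $p'$ with $t=h^{-1}$'' but with the composite $fh$ in the role of the lemma's ``$f$'' gives $\secat_{fhh^{-1}}(p')\le\secat_{fh}(p)$, i.e. $\secat_f(p')\le\secat_{fh}(p)$ up to replacing $fhh^{-1}$ by its homotopic map $f$ (legitimate since $\secat$ only depends on the homotopy class of the indexing map, as noted right after the definition). For the reverse inequality, apply the lemma to ``$p'$ is dominated by $p$ with $t=h$'' and with $f$ itself in the role of the lemma's ``$f$'': this yields $\secat_{fh}(p)\le\secat_f(p')$. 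Combining the two gives $\secat_{fh}(p)=\secat_f(p')$, which is exactly the claim after renaming (the statement writes $hf$ for what is the composite $f\circ h$ of $h\colon B\to B'$ followed by $f\colon B'\to X$).

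I expect the only real subtlety to be bookkeeping of which space each map is defined on and which composite plays the role of the lemma's parameter ``$f$'', since the lemma's conclusion $\secat_{ft}(p')\le\secat_f(p)$ already bakes in a precomposition with $t$; one has to feed it $fh$ (not $f$) in one direction so that the $t=h^{-1}$ cancels and one is left with $f$ on the nose. The appeal to homotopy invariance of $\secat$ in the indexing map is what lets that cancellation go through cleanly, and that invariance is already recorded in the line ``if $f\simeq g$ and $p\simeq q$ then $\secat_f(p)=\secat_g(q)$.'' No genuine obstacle remains — this is a formal consequence of the lemma — so the write-up is essentially the two-line symmetric application sketched above.
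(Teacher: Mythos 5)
Your argument is correct and is exactly the paper's intended proof: the proposition is stated as an immediate consequence of the preceding lemma, obtained by applying it twice, once in each direction, after observing that a homotopy-commutative square with horizontal homotopy equivalences gives a domination of $p$ by $p'$ and of $p'$ by $p$ (via the homotopy inverses), together with the homotopy invariance of $\secat$ in the indexing map to cancel $hh^{-1}$. Your reading of the subscript $hf$ in the statement as the composite $f\circ h\colon B\to X$ is also the correct one.
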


The following summarize how the classical bounds for sectional category (see for instance \cite[\S9]{corlupotan}) have to be  modified for this new invariant.

\begin{proposition}\label{primera}
For any $p\colon E\to B$ and any map $f\colon B\to X$,
$$
\secat_f(p)\le \min\{\secat(p),\cat(f)\}.
$$
Moreover, if $E$ is contractible, then $\secat_f(p)=\cat(f)$.
\end{proposition}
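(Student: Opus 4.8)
The plan is to establish the two inequalities $\secat_f(p)\le\secat(p)$ and $\secat_f(p)\le\cat(f)$ separately, each by showing that the relevant categorical open sets are automatically $f$-categorical, and then to prove the reverse inequality $\cat(f)\le\secat_f(p)$ under the contractibility hypothesis on $E$.

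First I would observe that if $U\subset B$ admits a genuine local homotopy section $s\colon U\to E$ of $p$, i.e. $ps\simeq\mathrm{incl}_U$, then composing with $f$ gives $fps\simeq f_{|_U}$, so $U$ is $f$-categorical; taking $U$ to range over an optimal sectional-category cover yields $\secat_f(p)\le\secat(p)$. Next, if $U$ is categorical for the map $f$, meaning $f_{|_U}$ is nullhomotopic, then pick any point $e_0\in E$ with $p(e_0)$ in the (path-connected) base and let $s\colon U\to E$ be the constant map at $e_0$; since $f_{|_U}$ is nullhomotopic and so is the constant composite $fps$, we get $fps\simeq f_{|_U}$, hence $U$ is $f$-categorical and $\secat_f(p)\le\cat(f)$. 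This gives the first assertion, $\secat_f(p)\le\min\{\secat(p),\cat(f)\}$.

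For the ``moreover'' part, suppose $E$ is contractible. One inequality, $\secat_f(p)\le\cat(f)$, is already in hand. For the reverse, let $U\subset B$ be $f$-categorical with $f$-section $s\colon U\to E$, so $fps\simeq f_{|_U}$. Since $E$ is contractible, $ps\colon U\to B$ factors up to homotopy through a point, hence $fps$ is nullhomotopic; combined with $fps\simeq f_{|_U}$ this shows $f_{|_U}$ is nullhomotopic, i.e. $U$ is categorical for $f$. Running over an optimal $f$-categorical cover gives $\cat(f)\le\secat_f(p)$, and together with the bound above we conclude $\secat_f(p)=\cat(f)$.

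None of the steps presents a serious obstacle: the argument is a direct transcription of the classical proof that $\secat(p)\le\min\{\secat(p'),\mathrm{ucat}\}$-type bounds, adapted by inserting $f$ at the appropriate spot; the only point requiring a little care is that the relevant maps are between pointed, path-connected spaces, so that constant maps can be freely chosen as ``sections'' in the $\cat(f)$ comparison. If I had to name the most delicate ingredient, it is the use of contractibility of $E$ to force $fps$ nullhomotopic: one should phrase it as ``$ps$ is nullhomotopic because it factors through the contractible space $E$,'' which is immediate but is the only place where the hypothesis is actually consumed.
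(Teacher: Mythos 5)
Your proposal is correct and follows essentially the same route as the paper: the first bound by composing a genuine local section with $f$, the second by using a constant map as $f$-section when $f_{|_U}\simeq *$, and the reverse inequality when $E$ is contractible by noting that $fps$ factors through a contractible space, forcing $f_{|_U}$ nullhomotopic. Your write-up merely makes explicit the path-connectedness point that the paper leaves implicit.
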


\begin{proof} The inequality $\secat_f(p)\le \secat(p)$ is obvious. Also, if $f_{|_U}\simeq *$ the constant map $*\colon U\to B$ is an $f$-section. This proves the other inequality. Finally, if  $U\subset B$ is $f$-categorical and $E$ is contractible, $f_{|_U}$ is homotopically trivial. This shows that $\cat(f)\le\secat_f(p)$ whenever $E$ is contractible, which proves the last assertion.
\end{proof}
In particular, as $\cat(f)\le\min\{\cat(B),\cat(X)\}$, we obtain:
\begin{corollary}
$\secat_f(p)\le \min\{\cat(B),\cat(X)\}$. In particular, if either $B$ or $X$ is a co-$H$-space, then $\secat_f(p)\le 1$.
\end{corollary}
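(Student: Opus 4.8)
The plan is to deduce the corollary directly from Proposition~\ref{primera} together with the standard fact that the category of a map is bounded above by the category of either its source or its target. Concretely, Proposition~\ref{primera} gives $\secat_f(p)\le\cat(f)$, so it suffices to observe that $\cat(f)\le\min\{\cat(B),\cat(X)\}$. For the bound $\cat(f)\le\cat(B)$: if $B=\bigcup_{i=0}^m V_i$ with each $V_i$ contractible in $B$, then on each $V_i$ the inclusion $V_i\hookrightarrow B$ is null-homotopic, hence so is $f_{|_{V_i}}=f\circ(\text{inclusion})$; this exhibits a covering of $B$ by $m+1$ open sets on which $f$ is homotopically trivial, so $\cat(f)\le m=\cat(B)$. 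For the bound $\cat(f)\le\cat(X)$: if $X=\bigcup_{j=0}^k W_j$ with each $W_j$ contractible in $X$, then the sets $U_j=f^{-1}(W_j)$ cover $B$, and on each $U_j$ the composite $U_j\hookrightarrow B\xrightarrow{f}X$ factors through $W_j\hookrightarrow X$, which is null-homotopic; hence $f_{|_{U_j}}\simeq *$ and $\cat(f)\le k=\cat(X)$.

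Combining, $\secat_f(p)\le\cat(f)\le\min\{\cat(B),\cat(X)\}$, which is the first assertion. For the second assertion, recall that a co-$H$-space $Z$ satisfies $\cat(Z)\le 1$: the reduced diagonal $Z\to Z\wedge Z$ is null-homotopic for a co-$H$-space, which is one of the standard equivalent characterizations of $\cat(Z)\le 1$ (a space has category at most one precisely when the reduced diagonal into the smash product is trivial). Thus if $B$ is a co-$H$-space then $\cat(B)\le 1$, and if $X$ is a co-$H$-space then $\cat(X)\le 1$; in either case $\min\{\cat(B),\cat(X)\}\le 1$, so $\secat_f(p)\le 1$.

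There is essentially no obstacle here: the corollary is a formal consequence of Proposition~\ref{primera} and elementary properties of $\cat(-)$ for maps and co-$H$-spaces. The only point requiring a modicum of care is making the two inequalities $\cat(f)\le\cat(B)$ and $\cat(f)\le\cat(X)$ explicit, since the paper quotes $\cat(f)\le\min\{\cat(B),\cat(X)\}$ in passing without proof; both follow immediately from the covering definitions as indicated above, using in the second case that preimages of an open cover form an open cover and that precomposition preserves null-homotopy.
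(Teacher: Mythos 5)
Your proof is correct and follows exactly the paper's route: the corollary is deduced from Proposition \ref{primera} via $\secat_f(p)\le\cat(f)\le\min\{\cat(B),\cat(X)\}$, together with $\cat\le 1$ for co-$H$-spaces. The only difference is that you spell out the standard inequalities $\cat(f)\le\cat(B)$ and $\cat(f)\le\cat(X)$, which the paper simply quotes; your covering arguments for these are fine.
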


An interesting feature of this invariant is the following.

\begin{proposition} Let $p\colon E\to B$  be the pullback of a fibration $p'\colon E'\to B'$ along $f\colon B\to B'$. If $E'$ is contractible, then,
$$
\secat_f(p)=\cat(f).
$$
\end{proposition}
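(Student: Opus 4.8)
The plan is to prove the two inequalities $\secat_f(p) \le \cat(f)$ and $\cat(f) \le \secat_f(p)$ separately. The first is already available: since $p$ is a pullback of $p'$ and $E'$ is contractible, $E$ fits into the pullback square, and by Proposition~\ref{primera} (the ``$\secat_f(p) \le \cat(f)$'' half, which requires no hypothesis on $E$) we get $\secat_f(p) \le \cat(f)$ immediately. So the real content is the reverse inequality $\cat(f) \le \secat_f(p)$, and the hypotheses we must exploit are precisely that $p$ is a \emph{fibration-pullback} with contractible total space $E'$; note that Proposition~\ref{primera} only gives the reverse inequality when $E$ itself is contractible, which need not hold here, so a new argument is needed.

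First I would set up the pullback diagram explicitly: write $E = \{(b, e') \in B \times E' : f(b) = p'(e')\}$ with $p(b,e') = b$ and $\widetilde f \colon E \to E'$ the other projection, so that $p' \widetilde f = f p$. Now suppose $U \subset B$ is $f$-categorical, with a map $s \colon U \to E$ and a homotopy $H \colon fps \simeq f_{|_U}$. I want to conclude that $f_{|_U}$ is nullhomotopic, which would show every $f$-categorical set is $f$-categorical in the strong (``$\cat(f)$'') sense, hence $\cat(f) \le \secat_f(p)$. The key point: composing $s$ with $\widetilde f$ gives $\widetilde f s \colon U \to E'$, and since $E'$ is contractible, $\widetilde f s \simeq *$. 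Applying $p'$, we get $p' \widetilde f s = fps \simeq *$. Combined with $H \colon fps \simeq f_{|_U}$, this yields $f_{|_U} \simeq *$. Thus every $f$-categorical open set is in fact null-categorical for $f$, and a covering by $n+1$ $f$-categorical sets is a covering witnessing $\cat(f) \le n$; taking the minimum gives $\cat(f) \le \secat_f(p)$.

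Wait — I should double-check that the fibration hypothesis is actually used, since the argument above seemingly only used contractibility of $E'$. The subtlety is in the direction of the homotopy $p'\widetilde f s \simeq f_{|_U}$: we have $fps = p'\widetilde f s$ strictly (from the pullback), and we produced $p'\widetilde f s \simeq *$, and separately $fps \simeq f_{|_U}$; since $fps = p'\widetilde f s$ as \emph{maps}, these chain together with no fibration needed. So in fact only contractibility of $E'$ and the pullback structure (giving the map $\widetilde f$ and the relation $p'\widetilde f = fp$) are needed here; the fibration hypothesis may be a harmless extra assumption carried over from the ambient setting, or it is there to guarantee the pullback $p$ is itself a fibration so that $\secat_f(p)$ behaves well. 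I expect the only mild obstacle is bookkeeping the homotopies carefully (in particular making sure all compositions and restrictions are to the right subspaces), which is routine. I would close by remarking that this proposition explains, for instance, why $\tc(f) = \secat_{f\times f}(\pi)$ with $\pi\colon X^I \to X \times X$ specializes correctly, and that combined with Proposition~\ref{primera} it pins down $\secat_f(p) = \cat(f)$ exactly.
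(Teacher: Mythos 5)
Your argument is correct and is essentially the paper's own proof: the reverse inequality $\cat(f)\le\secat_f(p)$ is obtained exactly as in the paper by noting that $fps$ factors through the contractible $E'$ (via the pullback projection), hence is nullhomotopic, so every $f$-categorical set is categorical for $f$, while $\secat_f(p)\le\cat(f)$ comes from Proposition~\ref{primera}. Your side observation that the fibration hypothesis is not really needed for this step matches the paper's (equally terse) use of it only as ambient setting.
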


\begin{proof}
Let $U\subset B$ be an $f$-categorical open set and $s\colon U\to E$ an $f$-section. As $E'$ is contractible then $fps\simeq f{|_U}\simeq *$. This proves that $\cat(f)\le \secat_f(p)$. The other inequality is given by Proposition \ref{primera}
\end{proof}

The following immediate consequences are examples of this situation:

\begin{corollary} Let $p\colon E\to B$ a principal fibration (resp. $G$-bundle) classified by a map $f\colon B\to K(\pi,n)$ (resp. $f\colon B\to BG$). Then,
$$
\secat_f(p)=\cat(f).
$$
\hfill$\square$
\end{corollary}

We now set the lower cohomological lower bound of the $f$-sectional category. Recall that the {\em nilpotency index} of a ring $R$, $\nil R$, is the biggest $n\le\infty$ such that $R^n\not=0$.

\begin{proposition}\label{nilpotencia} $\secat_f(p)\ge \nil{\ker p^*}_{|_{\im f^*}}$.
\end{proposition}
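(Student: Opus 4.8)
The plan is to adapt the classical zero-divisor cup-length lower bound for sectional category, paying attention to which cohomology classes are actually available to us. Throughout, cohomology is taken with coefficients in a fixed commutative ring, and for an open set $U\subseteq B$ I write $j_U\colon U\hookrightarrow B$ for the inclusion. Here ${\ker p^*}_{|_{\im f^*}}$ denotes the ideal $\ker p^*\cap\im f^*$ of the subring $\im f^*\subseteq H^*(B)$. Fix any finite integer $n\le \nil{\ker p^*}_{|_{\im f^*}}$; it suffices to show $\secat_f(p)\ge n$. By definition of the nilpotency index there are classes $b_1,\dots,b_n$, each lying in $\ker p^*\cap\im f^*$, with $b_1\smile\cdots\smile b_n\ne 0$ in $H^*(B)$; choose $a_i\in H^*(X)$ with $f^*(a_i)=b_i$.

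Suppose, for contradiction, that $B=U_1\cup\cdots\cup U_n$ with every $U_i$ being $f$-categorical, and pick $f$-sections $s_i\colon U_i\to E$, so that $f\,p\,s_i\simeq f\circ j_{U_i}$. The crucial observation is that each $b_i$ restricts to zero on $U_i$:
$$
j_{U_i}^*(b_i)=(f\circ j_{U_i})^*(a_i)=(f\,p\,s_i)^*(a_i)=s_i^*\,p^*\,f^*(a_i)=s_i^*\,p^*(b_i)=0,
$$
the last equality because $b_i\in\ker p^*$. Note that both hypotheses on $b_i$ enter here: that $b_i$ lies in $\im f^*$ is what lets us write $b_i=f^*(a_i)$ and transport the computation along the homotopy $f\,p\,s_i\simeq f\circ j_{U_i}$, while $b_i\in\ker p^*$ is what finally annihilates it.

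To conclude I would invoke the usual relative-cup-product argument. Since $j_{U_i}^*(b_i)=0$, the long exact sequence of the pair $(B,U_i)$ supplies a class $\bar b_i\in H^*(B,U_i)$ mapping to $b_i$ under the natural map $H^*(B,U_i)\to H^*(B)$. As the $U_i$ are open, the family $\{U_1,\dots,U_n\}$ is excisive, so the iterated relative cup product yields a well-defined class
$$
\bar b_1\smile\cdots\smile\bar b_n\ \in\ H^*\bigl(B,\,U_1\cup\cdots\cup U_n\bigr)=H^*(B,B)=0,
$$
whose image in $H^*(B)$ is $b_1\smile\cdots\smile b_n$ by naturality of the cup product. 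Hence $b_1\smile\cdots\smile b_n=0$, contradicting the choice of the $b_i$, and therefore $\secat_f(p)\ge n$. Letting $n$ run up to $\nil{\ker p^*}_{|_{\im f^*}}$ finishes the proof.

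There is no deep obstacle: the argument is a direct generalization of the zero-divisor cup-length bound for $\secat$, and the relative-cup-product bookkeeping is routine. The one point genuinely worth underlining is the interplay noted above — a class is usable only when it is simultaneously a pullback from $X$ (so the $f$-section computation applies) and trivial in $H^*(E)$ (so it restricts to zero on each $f$-categorical set) — which is precisely why the relevant object is $\ker p^*\cap\im f^*$ viewed inside $\im f^*$, rather than $\ker p^*$ itself.
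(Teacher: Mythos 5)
Your proof is correct and follows essentially the same route as the paper's: pull each class back as $b_i=f^*(a_i)$, use the homotopy $fps_i\simeq f\circ j_{U_i}$ together with $b_i\in\ker p^*$ to kill its restriction to $U_i$, lift to $H^*(B,U_i)$, and conclude by the vanishing of the relative cup product in $H^*(B,B)$. The only (immaterial) difference is that you argue by contradiction with $n$ covering sets while the paper argues directly from a cover by $\secat_f(p)+1$ sets.
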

Here $(-)^*$ denotes the morphism induced in reduced cohomology over any fixed ring.

\begin{proof}
Assume $\secat_f(p)=n$ and let $k_i\colon U_i\hookrightarrow B$, $i=1,\dots,n+1$,  be $f$-categorical open sets covering $B$ with $f$-sections $s_i$. Consider the long exact sequence
$$
\cdots\to H^*(B,U_i)\stackrel{q_i^*}{\to} H^*(B)\stackrel{k_i^*}{\to}H^*(U_i)\to\cdots
$$
induced by the pair $(B,U_i)$ and let $\gamma_1=f^*(\alpha_1),\dots,\gamma_{n+1}=f^*(\alpha_{n+1})\in\ker p^*\cap\im f^*$. Then,
$$
k_i^*(\gamma_i)=k_i^*f^*(\alpha_i)=k_i^*s_i^*p^*f^*(\alpha_i)=0,
$$
and thus $\gamma_i\in \ker k_i^*=\im q_i^*$. Write $\gamma_i=q_i^*(\overline\gamma_i)$, $\overline\gamma_i\in H^*(B,U_i)$. To finish observe that $\overline\gamma_1
\cup\dots\cup\overline\gamma_{n+1}\in H^*(B,B)=0$ and, denoting $q\colon B\to (B,B)$, we have
$
\gamma_1\cup\dots\cup\gamma_{n+1}=q^*(\overline\gamma_1
\cup\dots\cup\overline\gamma_{n+1}
)=0$.

\end{proof}

Observe that $\nil{\ker p^*}_{|_{\im f^*}}$ is in general smaller than $ \nil \ker (fp)^*$ which is the classical cohomological lower bound of $\secat(fp)$.

Next we give the ``Ganea and Whitehead characterizations'' of the $f$-sectional category. For the first, we follow the classical approach of \cite{ja}, improved in \cite[\S2]{ferghienkahlvan}, with the suitable modifications.

Recall that given a fibration $q\colon Z\to Y$, the {\em $n$-fold join of $q$} is the space $*^n_YZ$ inductively defined as follows: $*^0_YZ=Z$; $*^1_YZ=Z*_YZ$ is the double mapping cylinder of the projections of $Z\times_YZ$ over $Z$,
$$
Z*_YZ=\bigl((Z\times_YZ)\times I\amalg Z\amalg Z\bigr)/(x,y,0)\sim x,(x,y,1)\sim y.
$$
Finally, $*^n_YZ=(*^{n-1}_YZ)*_YZ$. The {\em $n$-fold join fibration} is the fibration $*^n_Yq\colon *^n_YZ\to Y$ inductively defined by $*^0_Yq=q$; $*^1_Yq=q*_Yq$ where $(q*_Yq)[x,y,t]=q(x)=q(y)$; and $*^n_Yq=(*^{n-1}_Yq)*_Yq$.

Now, factor a given composition $E\stackrel{p}{\to}B\stackrel{f}{\to}Y$ as $qj$ where
$j\colon E\stackrel{\simeq}{\to} Z$ is a homotopy equivalence and $q\colon Z\to Y$ is a fibration. Then, we have:

\begin{proposition}\label{ganea} $\secat_f(p)$ is the least integer $n$ for which there exists $\sigma\colon B \to Z$ such that $(*^n_Yq)\sigma=f$.
\end{proposition}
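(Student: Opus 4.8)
The plan is to mimic the classical Ganea characterization of sectional category (as presented in \cite{ja} and refined in \cite[\S2]{ferghienkahlvan}), tracking carefully where the map $f$ enters. The key fact I will use about the $n$-fold join fibration is the standard one: a map $\sigma\colon B\to Z$ with $(*^n_Yq)\sigma = f$ exists if and only if $B$ admits a covering by $n+1$ open sets $U_0,\dots,U_n$ on each of which there is a \emph{homotopy lifting} of $f_{|_{U_i}}$ through $q$, i.e. a map $t_i\colon U_i\to Z$ with $qt_i\simeq f_{|_{U_i}}$. This equivalence is exactly the content of the fibrewise-join description of the Whitehead-type fat wedge and is proved in the references cited; the only thing to check is that it goes through with $f$ in place of $\id_B$, which it does because the argument only ever uses the fibration $q$ over $Y$ and maps of $B$ (resp. its open subsets) into $Y$, never into $B$ itself.

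Granting that, the proof is a two-way implication. First suppose $\secat_f(p)=n$, so $B$ is covered by $n+1$ $f$-categorical open sets $U_i$ with $f$-sections $s_i\colon U_i\to E$, meaning $fps_i\simeq f_{|_{U_i}}$. Since $E\xrightarrow{j}{}\simeq Z$ is a homotopy equivalence and $f = fp$ factors as $qj$ up to the choices made (here I should be a touch careful: $qj \simeq fp$ is what the factorization gives, so $fps_i \simeq qj s_i$), the maps $t_i := j s_i\colon U_i\to Z$ satisfy $qt_i = qjs_i \simeq fps_i \simeq f_{|_{U_i}}$. Hence each $U_i$ carries a homotopy lifting of $f_{|_{U_i}}$ through $q$, and by the quoted equivalence there is $\sigma\colon B\to Z$ with $(*^n_Yq)\sigma = f$. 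Conversely, if such a $\sigma$ exists for some $n$, the same equivalence produces a covering $\{U_i\}_{i=0}^n$ of $B$ and maps $t_i\colon U_i\to Z$ with $qt_i\simeq f_{|_{U_i}}$; setting $s_i := j^{-1} t_i\colon U_i\to E$ (using a homotopy inverse $j^{-1}$ of $j$, with $jj^{-1}\simeq\id_Z$) we get $fps_i = fpj^{-1}t_i \simeq qjj^{-1}t_i \simeq qt_i \simeq f_{|_{U_i}}$, so each $U_i$ is $f$-categorical and $\secat_f(p)\le n$. Taking the least such $n$ in each direction gives equality.

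The main obstacle — and the only place the argument is not purely formal — is establishing the quoted equivalence between "a global lift $\sigma$ into the $n$-fold join" and "a cover by $n+1$ opens with homotopy liftings of $f$ through $q$." In the classical case ($f=\id_B$) this rests on: (i) the $n$-fold join fibration $*^n_Yq$ being, up to fibre homotopy over $Y$, the fibrewise join model whose fibre over a point is the join of $n+1$ copies of the fibre of $q$; and (ii) a standard partition-of-unity / Mayer–Vietoris gluing turning local sections into a global map into this join. Both ingredients are insensitive to whether the base map is $\id_B$ or a general $f\colon B\to X$, because they are statements about the fibration $q$ over $Y$ pulled back along a map; so I would either cite \cite[\S2]{ferghienkahlvan} for the general-base version directly, or reproduce its short proof verbatim with $f$ substituted for $\id_B$. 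Everything else in the proposition is bookkeeping with the homotopy equivalence $j$ and the relation $qj\simeq fp$.
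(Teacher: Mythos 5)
Your proposal is correct in substance and follows the same route as the paper: translate $f$-sections of $p$ into (homotopy, hence strict) lifts of $f_{|_{U_i}}$ through the fibration $q$ via the homotopy equivalence $j$ and its inverse — that bookkeeping is exactly what the paper does at the start and end of its proof — and then invoke the Ganea/Schwarz-type equivalence between an $(n+1)$-fold cover of $B$ by such local lifts and a single lift of $f$ through $*^n_Yq$. The one caveat is that this last equivalence, which you correctly identify as the only non-formal step, is precisely what the paper proves rather than cites: the cited sources \cite{ja} and \cite{ferghienkahlvan} state the case of the identity base map only, so "cite the general-base version directly" is not actually available, and the paper instead (i) runs the James-style induction, gluing two local lifts at a time into the fibrewise join by shrinking the cover and using a Urysohn function (this is where normality/CW hypotheses enter), and (ii) for the converse uses \cite[Lemma 2.4]{ferghienkahlvan}, that the pullback $q_n$ of $q$ along $*^n_Yq$ has $\secat\le n$, pulling its local sections back along $\sigma$. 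Your stated fallback — redo the classical argument with $f$ in place of $\id_B$ — does work and is what the paper does; a slicker way to make your reduction rigorous without rewriting the gluing is to observe that local lifts of $f$ through $q$ are local sections of the pullback fibration $f^*q$ over $B$, that fibrewise join commutes with base change, so lifts of $f$ through $*^n_Yq$ correspond to sections of $*^n_B(f^*q)$, and then quote the classical theorem verbatim for $f^*q$. As written, though, your proof has the central step asserted rather than established, so it needs one of these two completions to stand on its own.
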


\begin{proof}
Assume $\secat_f(p)=n$. By induction on $m$, with $0\le m\le n$ we show the existence of an open covering $\{U_i\}_{i=0}^{n-m}$ of $B$ for which:

There exists a map $\sigma_0\colon U\to *^m_YE$ such that $(*^m_Yq)\sigma_0=f_{|_{U_0}}$.

There are maps $ \sigma_i\colon U_i\to Z$, for $i=1,\dots,n-m$, such that $q\sigma_i=f_{|_{U_i}}$.

For $m=0$ choose a covering $\{U_i\}_{i=0}^{n}$ of $B$ and $f$-sections $\tau_i\colon U_i \to E$ of $p$ . As $q$ is a fibration we may replace the maps $j\tau_i$ by $\sigma_i\colon U_i\to Z$ so that $q\sigma_i=f_{|_{U_i}}$.

Let $m<n$ and $\mathcal U=\{U_i\}_{i=0}^{n-m}$ an open covering  of $B$ with maps $\sigma_0\colon U\to *^m_YE$, $ \sigma_i\colon U_i\to Z$, for $i=1,\dots,n-m$, satisfying the induction hypothesis. Choose refinements of $\mathcal U$, $\{V_i\}_{i=0}^{n-m}$, $\{W_i\}_{i=0}^{n-m}$ with
$$
V_i\subset \overline V_i\subset W_i\subset\overline W_i\subset U_i,
$$
and consider the disjoint closed subspaces $A_0=\overline V_0\cap(B-W_1)$, $A_1=\overline V_1\cap(B-W_0)$. Observe that $A_0\cap A_1\cap C=\overline V_0\cup \overline V_1$ being $C=\overline W_0\cap\overline W_1\cap(\overline V_0\cup \overline V_1)$. By the Urysohn Lemma choose a continuous function $h\colon B\to I$ such that $h(A_0)=0$ and $h(A_1)=1$ and define $\sigma\colon \overline V_0\cup \overline V_1\to *^{m+1}_YZ$,
$$
\sigma(x)=\begin{cases} [\sigma_0(x)],& x\in A_0,\\ [\sigma_1(x)],& x\in A_1,\\
[\sigma_0(x),\sigma_1(x),h(x)],& x\in C.\end{cases}
$$
To finish, consider the open covering of $B$ given by $\{V_0\cap V_1, U_i\}_{i=2}^{n-m}$ and the maps $\sigma\colon V_0\cup V_1\to *^{m+1}_YZ$, $\sigma_i\colon U_i\to Z$, $i=2,\dots,m-n$.

Conversely, assume that  $\sigma\colon B \to Z$ is such that $(*^n_Yq)\sigma=f$. By \cite[Lemma 2.4]{ferghienkahlvan} the sectional category of the fibration $q_n$ given by  the pullback diagram
$$
\xymatrix{(*^n_YZ)\times_YZ \ar[r]^(0.65){\pi}\ar[d]_{q_n} & Z\ar[d]^q\\
{*^n_YZ} \ar[r]^{*^n_Yq}& Y}
$$
is bounded above by $n$. Hence we may choose an open covering $\{V_i\}_{i=0}^n$ of $*^n_YZ$ and local sections $\gamma_i\colon V_i\to (*^n_YZ)\times_YZ$ of $q_n$. For each $i=0,\dots,n$ consider $U_i=\sigma^{-1}(V_i)$ and $\tau_i=\pi\gamma_i\sigma\colon U_i\to Z$. Then $\{U_i\}_{i=0}^n$ is an open covering of $B$ and the composition $\sigma_i=k\tau_i\colon U_i\to E$, with $k$ a homotopy inverse of $j$, is a local $f$-section of $p$.
\end{proof}

As $\secat_f(p)$ is a homotopy invariant it is useful to have a less rigid analogue of the above result. For it, and abusing of notation, we denote by $E*_YE$ the (homotopy) join, i.e., the homotopy pushout of the homotopy pullback of $E\stackrel{fp}{\to} Y\stackrel{fp}{\leftarrow} E$. More generally, define $*^0_YE=E$ and $*^n_YE=(*^{n-1}_YE)*_YE$.  By the weak universal property of the homotopy pushout we also get maps, $*^n_Yfp\colon *^n_YE\to Y$. Observe that $*^n_YE\simeq *^n_YZ$ and the Proposition above readily implies:

\begin{theorem}\label{ganeahomo} $\secat_f(p)$ is the least integer $n$ for which there exists $\sigma\colon B \to E$ such that $(*^n_Yfp)\sigma\simeq f$.\hfill$\square$
\end{theorem}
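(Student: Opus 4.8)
The plan is to derive Theorem~\ref{ganeahomo} from Proposition~\ref{ganea} by transporting the rigid statement across the natural homotopy equivalences. First I would recall that the given composition $E\stackrel{p}{\to}B\stackrel{f}{\to}X$ was factored in Proposition~\ref{ganea} as $f p = q j$ with $j\colon E\xrightarrow{\simeq} Z$ a homotopy equivalence and $q\colon Z\to Y$ a fibration; by construction $Y$ is just a space with the homotopy type of $X$ equipped with a model of the map $f p$ as a fibration, and in particular there is no loss in identifying $Y$ with the mapping-path-space factorization of $f p$. The key geometric input is that the strict fibered join $*^n_YZ$ and the homotopy join $*^n_YE$ (the iterated homotopy pushout of the homotopy pullback of $E\xrightarrow{fp}Y\xleftarrow{fp}E$) are connected by a homotopy equivalence compatible with the maps to $Y$, i.e.\ there is a commutative-up-to-homotopy diagram
$$
\xymatrix{*^n_YE \ar[r]^{\simeq}\ar[dr]_{*^n_Yfp} & *^n_YZ\ar[d]^{*^n_Yq}\\ & Y.}
$$
This follows by induction on $n$: for $n=0$ it is just $j\colon E\xrightarrow{\simeq}Z$ over $Y$; for the inductive step, since $j$ is an equivalence over $Y$ the induced map on homotopy pullbacks $E\times^h_YE\to Z\times_YZ$ is an equivalence (and $Z\times_YZ$ is already the homotopy pullback because $q$ is a fibration), hence the induced map on the mapping cylinders / homotopy pushouts $E*_YE\to Z*_YZ$ is an equivalence over $Y$, and one iterates. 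Granting this, the map $\sigma\colon B\to E$ can be postcomposed with $E\simeq *^0_YE\to\cdots\to *^n_YE$ and then with the equivalence to $*^n_YZ$ to produce $\sigma'\colon B\to *^n_YZ$ with $(*^n_Yq)\sigma'\simeq f$; conversely, a $\sigma'$ from Proposition~\ref{ganea} is pushed back through a homotopy inverse to give $\sigma\colon B\to *^n_YE$, and since $E\to *^n_YE$ need not be split one instead observes that the least $n$ in the homotopy formulation agrees with the least $n$ in the strict formulation because the two conditions ``$\exists\,\sigma\colon B\to *^n_YZ$ with $(*^n_Yq)\sigma=f$'' and ``$\exists\,\sigma\colon B\to *^n_YE$ with $(*^n_Yfp)\sigma\simeq f$'' are interchanged by the equivalence $*^n_YE\simeq *^n_YZ$ over $Y$, using that $f$ factors (up to homotopy) through $q$ hence through $*^0_Yq$, so that a strict lift against $*^n_Yq$ is the same homotopy-theoretic data as a lift up to homotopy against $*^n_Yfp$.

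In more detail, the argument runs as follows. By Proposition~\ref{ganea}, $\secat_f(p)$ equals the least $n$ with a strict lift $\sigma\colon B\to *^n_YZ$ of $f$ along $*^n_Yq$. Given such a $\sigma$, compose with a homotopy inverse of the equivalence $*^n_YE\xrightarrow{\simeq}*^n_YZ$ over $Y$ to obtain $\overline\sigma\colon B\to *^n_YE$ with $(*^n_Yfp)\overline\sigma\simeq (*^n_Yq)\sigma= f$; this gives the ``$\le$'' direction of Theorem~\ref{ganeahomo}. Conversely, given $\overline\sigma\colon B\to *^n_YE$ with $(*^n_Yfp)\overline\sigma\simeq f$, compose with the equivalence $*^n_YE\xrightarrow{\simeq}*^n_YZ$ to get $\sigma_0\colon B\to *^n_YZ$ with $(*^n_Yq)\sigma_0\simeq f$; since $*^n_Yq$ is a fibration one may homotope $\sigma_0$ to a strict lift $\sigma\colon B\to *^n_YZ$ with $(*^n_Yq)\sigma=f$, whence $\secat_f(p)\le n$ by Proposition~\ref{ganea}. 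Taking the least such $n$ on both sides yields the claimed equality, which is exactly the content of Theorem~\ref{ganeahomo} (stated there with $E$ in place of $Z$, legitimate precisely because of the over-$Y$ equivalence).

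The main obstacle I anticipate is setting up the inductive homotopy equivalence $*^n_YE\simeq *^n_YZ$ cleanly and checking that it is genuinely a map \emph{over} $Y$ (commuting up to homotopy with the structure maps to $Y$), so that lifting problems against $*^n_Yq$ and $*^n_Yfp$ correspond. The delicate point is the base case/first step: one must know that $E\times^h_YE\simeq Z\times_YZ$, which uses both that $j$ is an equivalence and that, because $q$ is a fibration, the strict pullback $Z\times_YZ$ already computes the homotopy pullback; from there the double mapping cylinder construction is homotopy invariant in its inputs, so the equivalence propagates, but one should be careful that at each stage the new structure map to $Y$ is the one induced by the (weak) universal property, matching $*^n_Yfp$ up to homotopy. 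Everything else---the fibration replacement to convert a homotopy lift into a strict one, and the passage from ``exists a lift'' to ``least $n$''---is routine. Since the excerpt explicitly says ``the Proposition above readily implies'' and states the theorem with a $\square$, I would keep this to a short paragraph citing $*^n_YE\simeq *^n_YZ$ and Proposition~\ref{ganea}, rather than belaboring the homotopy pushout bookkeeping.
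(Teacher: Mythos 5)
Your argument is correct and is exactly the route the paper takes: the paper's entire ``proof'' is the remark that $*^n_YE\simeq *^n_YZ$ over $Y$ together with Proposition~\ref{ganea}, and your write-up simply supplies the inductive homotopy-invariance argument for that equivalence and the standard fibration trick to trade a homotopy lift against $*^n_Yq$ for a strict one. (You also correctly read the theorem's ``$\sigma\colon B\to E$'' as the intended $\sigma\colon B\to *^n_YE$.)
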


\begin{remark}\label{diagrama} Note that for each $n\ge 0$,  there are maps $\iota_n\colon E\to *^n_YE$ inductively defined by the construction of the homotopy join, making commutative the following diagram,
$$
\xymatrix{E \ar[r]^(0.45){\iota_n}\ar[d]_{p} & {*^n_YE}\ar[d]^{*^n_Yfp}\\
B \ar[r]^{f}& {Y}}
$$
\end{remark}

For the Whitehead characterization of $\secat_f(p)$ factor $fp$ as $qj$ where $j\colon E\hookrightarrow Z$ is a cofibration and $q\colon Z\stackrel{\simeq}{\to} Y$ is a homotopy equivalence. Recall that the {\em $n$th fat wedge of $j$} is defined as
$$
T^n(j)=\{(x_0,\dots,x_n)\in Z^{ n+1}\,\,\text{such that $x_k\in\im j$ for some $k$}\}.
$$

Consider the composition
\begin{equation}\label{hn}
h_n\colon T^n(j)\hookrightarrow Z^{ n+1}\stackrel{\simeq}{\to} Y^{ n+1}
\end{equation}
  and observe  \cite[Thm.~3.3.2]{fa}, \cite{ma} that there is a homotopy pullback of the form
$$
\xymatrix{{*^n_YE} \ar[r]^{\eta}\ar[d]_{*^n_Yfp} & {T^n(j)}\ar[d]^{h_n}\\
Y \ar[r]^(0.40){\Delta^n}& {Y^{n+1}}}
$$
where $\Delta^n$ is  the $n$-diagonal map. Then:

\begin{theorem}\label{whitehead} $\secat_f(p)$ is the least integer $n$ for which the map $\Delta^n f\colon B\to Y^{\times n+1}$ homotopy factors through the $n$th fat wedge $T^n(j)$,
$$
\xymatrix{ & {T^n(j)}\ar[d]^{h_n}\\
B\ar[r]_(.37){\Delta^n f}\ar@{-->}[ru]& {Y^{ n+1}}.}
$$
\end{theorem}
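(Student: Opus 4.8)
The plan is to deduce Theorem~\ref{whitehead} directly from the Ganea-type characterization of Theorem~\ref{ganeahomo} together with the homotopy pullback square relating $*^n_YE$, $T^n(j)$, $Y$ and $Y^{n+1}$ displayed just above the statement. The common thread is that both characterizations are, at bottom, statements about the existence of a homotopy lift of $f\colon B\to Y$ along the map $*^n_Yfp\colon *^n_YE\to Y$: Theorem~\ref{ganeahomo} says that $\secat_f(p)\le n$ precisely when such a lift exists, and the homotopy pullback square converts the existence of such a lift into a homotopy factorization of $\Delta^n f$ through $h_n$. So it suffices to prove, for each fixed $n$, the equivalence: there is $\sigma\colon B\to *^n_YE$ with $(*^n_Yfp)\sigma\simeq f$ if and only if $\Delta^n f\colon B\to Y^{n+1}$ homotopy factors through $h_n\colon T^n(j)\to Y^{n+1}$.

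For the forward implication, suppose $\secat_f(p)\le n$ and let $\sigma\colon B\to *^n_YE$ be a map with $(*^n_Yfp)\sigma\simeq f$ provided by Theorem~\ref{ganeahomo} (read as valued in $*^n_YE$, using the inclusion $\iota_n$ of Remark~\ref{diagrama} if needed). Then $\eta\sigma\colon B\to T^n(j)$ and, by the homotopy commutativity of the pullback square, $h_n(\eta\sigma)\simeq\Delta^n\bigl((*^n_Yfp)\sigma\bigr)\simeq\Delta^n f$, so $\Delta^n f$ factors through $T^n(j)$. Conversely, suppose $g\colon B\to T^n(j)$ satisfies $h_n g\simeq\Delta^n f$. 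Writing $\Delta^n f=\Delta^n\circ f$, this exhibits a homotopy commutative square with sides $f$, $g$, $\Delta^n$, $h_n$; since the square above is a homotopy pullback, its weak universal property yields $\sigma\colon B\to *^n_YE$ with $\eta\sigma\simeq g$ and, what we need, $(*^n_Yfp)\sigma\simeq f$. By Theorem~\ref{ganeahomo} this gives $\secat_f(p)\le n$. Taking the least such $n$ in each direction proves the theorem.

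I do not expect a substantive obstacle, as the essential geometric input — the identification of $*^n_YE$ with the homotopy fiber product of $\Delta^n$ and $h_n$ — is supplied by \cite[Thm.~3.3.2]{fa} and \cite{ma}. The points requiring care are bookkeeping ones: reconciling the two factorizations of $fp$ in play (the one through a fibration used in the proofs of Proposition~\ref{ganea} and Theorem~\ref{ganeahomo}, and the one through the cofibration $j\colon E\hookrightarrow Z$ used to define $T^n(j)$), which is harmless since both yield the same homotopy type $*^n_YE$ and the same map $*^n_Yfp$ up to homotopy; and consistently using the \emph{weak} universal property of the homotopy pullback rather than any strict lifting, which is the correct tool here because $h_n$ need not be a fibration and $\secat_f(p)$ is a homotopy invariant anyway.
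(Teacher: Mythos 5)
Your argument is correct and coincides with the paper's own proof: both directions are handled exactly as in the text, deducing the statement from Theorem~\ref{ganeahomo} together with the homotopy pullback square identifying $*^n_YE$ with the homotopy fiber product of $\Delta^n$ and $h_n$, using $\eta\sigma$ for the forward implication and the weak universal property of the homotopy pullback for the converse. Your bookkeeping remark about reading the Ganea characterization as valued in $*^n_YE$ is also consistent with the paper's intent.
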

\begin{proof} If $\secat_f(p)=n$ then, via Theorem \ref{ganeahomo}, there exists a map $\sigma\colon B\to *^n_YE$ such that $(*^n_Yfp)\sigma\simeq f$. Hence, the map $\eta\sigma\colon B\to T^n(j)$ is the dotted lifting in the above diagram.

Conversely, given $\xi\colon B\to T^n(j)$ such that $h_n\xi\simeq f\Delta$, the weak universal property of the homotopy pullback produces a map $\sigma\colon B\to  *^n_YE$ such that $\eta\sigma\simeq\xi$ and $(*^n_Yfp)\sigma\simeq f$.
\end{proof}

\section{Topological complexity of a map}\label{section-on-target-topological-complexity}

The following is Definition \ref{tcprime} in terms of sectional category.

\begin{definition} Let $f\colon X\to Y$ be a continuous map. The {\em topological complexity of $f$} is defined as
$$
\tc(f)=\secat_{f\times f}(p)
$$
where $\pi\colon X^I\to X\times X$ is the path fibration, $\pi(\alpha)=\bigl(\alpha(0),\alpha(1)\bigr)$. In view of the commutative diagram
$$\xymatrix{
 X\ar[r]^(0.43)\simeq_(0.43)c\ar[d]_(0.43)\Delta
 &
 X^I\ar[ld]^(0.43){\pi}\\
 X\times X
 &
  }$$
  where $c(x)$ is the constant path in $x$, Proposition \ref{nodepende} implies that
  $$
  \tc(f)=\secat_{f\times f}(\Delta).
$$
\end{definition}
Note that
$
\tc(X)=\tc(\id_X)$.

\begin{proposition}\label{cotas} For any map $f\colon X\to Y$,
$$
\cat(f)\le \tc(f)\le \min\{\tc(X),\cat(f\times f)\}.
$$
\end{proposition}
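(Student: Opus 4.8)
The plan is to deduce the three-term inequality from the general results on $f$-sectional category already established, applied to the path fibration $\pi\colon X^I\to X\times X$ with the map $f\times f\colon X\times X\to Y\times Y$.

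First I would prove $\tc(f)\le\tc(X)$. By definition $\tc(X)=\tc(\id_X)=\secat_{\id_{X\times X}}(\pi)=\secat(\pi)$. Now for any covering $\{U_i\}$ of $X\times X$ by ordinary sectional-category open sets, with local homotopy sections $s_i\colon U_i\to X^I$ of $\pi$ (so $\pi s_i\simeq \iota_{U_i}$), one has $(f\times f)\pi s_i\simeq (f\times f)_{|_{U_i}}$, so each $U_i$ is $(f\times f)$-categorical. Hence $\secat_{f\times f}(\pi)\le\secat(\pi)$; this is exactly the inequality $\secat_f(p)\le\secat(p)$ recorded (with $\mathrm{id}$) in Proposition \ref{primera}, which I would simply cite.

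Next, $\tc(f)\le\cat(f\times f)$: this is the other half of Proposition \ref{primera}, namely $\secat_g(p)\le\cat(g)$, applied with $g=f\times f$ and $p=\pi$. Again I would cite it directly rather than re-prove it.

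The remaining, and main, inequality is $\cat(f)\le\tc(f)$. Here I would use the constant-path description $\tc(f)=\secat_{f\times f}(\Delta)$ provided in the definition, where $\Delta\colon X\to X\times X$ is the diagonal. Let $n=\tc(f)$ and let $\{U_i\}_{i=0}^n$ cover $X\times X$ with $(f\times f)$-sections $s_i\colon U_i\to X$ satisfying $(f\times f)\Delta s_i\simeq (f\times f)_{|_{U_i}}$. Restricting to the subspace $\Delta(X)\subset X\times X$, set $V_i=\Delta^{-1}(U_i)$, an open cover of $X$, and put $t_i=s_i\Delta_{|_{V_i}}\colon V_i\to X$. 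Writing $\mathrm{pr}_1\colon Y\times Y\to Y$ for the first projection, one computes on $V_i$ that
$$
f t_i \;=\; \mathrm{pr}_1\,(f\times f)\,\Delta\,s_i\,\Delta_{|_{V_i}} \;\simeq\; \mathrm{pr}_1\,(f\times f)\,\Delta_{|_{V_i}} \;=\; f_{|_{V_i}}.
$$
Thus $f$ restricted to each $V_i$ is homotopic to $f t_i$, which factors through $X$ via $t_i$; but that does not yet make $f_{|_{V_i}}$ null-homotopic, so one more step is needed. The efficient route is instead to invoke the pullback characterization: the diagonal $\Delta\colon X\to X\times X$ is (homotopy equivalent to) the pullback of the path fibration along itself, and more to the point one has a homotopy pullback square relating the evaluation fibration over $X\times X$ restricted along $f\times f$. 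I expect the cleanest argument is the Whitehead-type characterization of Theorem \ref{whitehead}: $\tc(f)\le n$ means $\Delta^1 f$-type data, i.e. the map $B=X\times X\to (Y\times Y)^{n+1}$ factors through the fat wedge; precomposing with $\Delta\colon X\to X\times X$ and projecting to a single $Y$-factor yields a factorization of $\Delta^n(f)\colon X\to Y^{n+1}$ through $T^n$ of the relevant cofibration, which by the Whitehead characterization of $\cat$ of a map (the $\secat_{\mathrm{id}}$ case, or directly \cite[\S9]{corlupotan}) gives $\cat(f)\le n$.

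The step I expect to be the genuine obstacle is precisely this last one: pinning down the comparison of fat-wedge / fibered-join data for $\pi$ over $X\times X$ with the fat-wedge data for $\mathrm{ev}$ or $\id$ over $X$, i.e. checking that restricting the $(f\times f)$-sectional data along $\Delta$ and projecting to one factor legitimately produces $\cat(f)$-data and not merely $\cat(f\times f)$ or something weaker. Everything else is a direct appeal to Proposition \ref{primera} and the homotopy invariance of $\secat_f$.
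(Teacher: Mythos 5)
Your handling of the two upper bounds is exactly the paper's: both $\tc(f)\le\tc(X)$ and $\tc(f)\le\cat(f\times f)$ follow by citing Proposition \ref{primera} with $p=\pi$ and the map $f\times f$, and nothing more is needed there. The problem is the main inequality $\cat(f)\le\tc(f)$, where your argument has a genuine gap. Restricting the $(f\times f)$-categorical data along the diagonal $\Delta\colon X\to X\times X$ is a dead end, not just an incomplete step: the restricted statement $ft_i\simeq f_{|_{V_i}}$ with $t_i\colon V_i\to X$ is satisfied by $t_i$ the inclusion, so it carries no information whatsoever about $\cat(f)$. The fallback you sketch via Theorem \ref{whitehead} is also not correct as stated: if you precompose the fat-wedge factorization of $\Delta^n(f\times f)$ with $\Delta$ and project each $Y\times Y$ to a single $Y$-factor, the image of $T^n(j)$ (for $j$ a cofibration replacement of $(f\times f)\pi\colon X^I\to Y\times Y$) does not land in the fat wedge relevant for $\cat(f)$, because the distinguished coordinates lying in (the image of) $X^I$ map to points of the form $\bigl(f(\alpha(0)),f(\alpha(1))\bigr)$, and neither projection of such a point is the base point of $Y$. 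You flagged this comparison as ``the genuine obstacle,'' and indeed it is; it is not resolved in your proposal.

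The paper's proof avoids all of this by restricting along a \emph{slice} rather than the diagonal: fix $x_0\in X$, and for each $(f\times f)$-categorical $U\subset X\times X$ with section $s\colon U\to X^I$ set $V=\{x\in X:\,(x_0,x)\in U\}$ and $k\colon V\to U$, $k(x)=(x_0,x)$. Then $q_2(f\times f)\pi sk\simeq f_{|_V}$, while the naturality square $\pi f^I=(f\times f)\pi$ shows this map equals $q_2\pi f^I sk$; since $f^Isk$ takes values in paths in $Y$ emanating from (a point mapped to) $f(x_0)$ --- equivalently, since the two evaluation maps $Y^I\to Y$ are homotopic, so $q_2\pi f^Isk\simeq q_1(f\times f)\pi sk\simeq \mathrm{const}_{f(x_0)}$ --- one concludes $f_{|_V}\simeq *$. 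Running over a covering $\{U_i\}$ realizing $\tc(f)$, the sets $V_i$ cover $X$ and witness $\cat(f)\le\tc(f)$. This slice argument (or, if you insist on the Ganea/Whitehead machinery, restricting the fibered data over $\{x_0\}\times X$, where $\pi$ becomes the based path fibration with contractible total space, so Proposition \ref{primera}'s last clause applies) is the missing idea; some version of it must replace your diagonal restriction.
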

\begin{proof}
Proposition \ref{primera} proves the second inequality. For the first, fix $x_0\in X$,  suppose $s\colon U\to X^I$ is and $f\times f$-categorical section of $\pi$, and let $V=\{x\in X,\,(x_0,x)\in U\}$. Consider the map $k\colon V\to U$, $k(x)=(x_0,x)$ and denote by $q_2\colon X\times X\to X$ the projection over the second factor. Then,
$$
q_2(f\times f)\pi sk\simeq f_{|_V}\colon V\to Y.
$$
Hence, in view of the commutative diagram
 $$\xymatrix{
 X^I\ar[r]^{f^I}\ar[d]_(0.43)\pi& Y^I\ar[d]_(0.43)\pi\\
 X\times X\ar[r]_{f\times f}& Y\times Y,\\
  }$$
  it follows that
  $$
  q_2(f\times f)\pi sk=q_2\pi f^Isk.
  $$
  However, observe that the image of the map $f^Isk$ lies in the contractible subspace of $Y^I$ of paths starting at $f(x_0)$. Thus,
  $$
  f_{|_V}\simeq q_2(f\times f)\pi sk\simeq *.
  $$
  Finally, observe that if $\{U_i\}_{i\in J}$ is an open covering with each $U_i$ as above, then the corresponding family $\{V_i\}_{i\in J}$ is an open covering of $X$. This proves the first inequality.
  \end{proof}
  Taking into account that in general, $\cat(f\times g)\le \cat(f)+\cat(g)$, we immediately deduce:
  \begin{corollary}\label{trivial}
  $\tc(f)=0$ if and only if $f\simeq *$.\hfill$\square$
  \end{corollary}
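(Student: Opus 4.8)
The statement to prove is Corollary \ref{trivial}: $\tc(f)=0$ if and only if $f\simeq *$.

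The plan is to deduce both implications from results already established in the excerpt, principally Proposition \ref{cotas} and the behaviour of $\cat$ of a product. For the forward direction, suppose $\tc(f)=0$. By the first inequality in Proposition \ref{cotas}, $\cat(f)\le\tc(f)=0$, so $\cat(f)=0$. But $\cat(f)=0$ means precisely that $X$ can be covered by a single open set (namely $X$ itself) on which $f$ is homotopically trivial, i.e. $f\simeq *$. So this direction is essentially immediate once we invoke $\cat(f)\le\tc(f)$.

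For the converse, suppose $f\simeq *$. Since $\tc$ is a homotopy invariant of the map (Proposition \ref{nodepende}, via the identification $\tc(f)=\secat_{f\times f}(\Delta)$), we may assume $f$ is the constant map. Then $f\times f\colon X\times X\to Y\times Y$ is also constant, hence homotopically trivial, so $\cat(f\times f)=0$. The second inequality in Proposition \ref{cotas} gives $\tc(f)\le\cat(f\times f)=0$, hence $\tc(f)=0$. Alternatively, and more in the spirit of the parenthetical remark preceding the corollary: one notes $\cat(f\times f)\le\cat(f)+\cat(f)=0$ when $f\simeq *$, which is the inequality the authors highlight; either way the upper bound collapses to $0$.

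The argument is therefore just a matter of assembling the two inequalities of Proposition \ref{cotas} with the elementary observation that $\cat(g)=0$ is equivalent to $g\simeq *$ for any map $g$, together with homotopy invariance to reduce $f\simeq *$ to $f$ constant. There is no real obstacle here; the only point requiring a line of care is the converse, where one must actually use that $\cat$ of a constant (equivalently, null-homotopic) map is $0$ and that a null-homotopic $f$ yields a null-homotopic $f\times f$ — both of which are trivial. Since the excerpt explicitly flags $\cat(f\times g)\le\cat(f)+\cat(g)$ right before the corollary, I expect the intended proof is the two-line combination: $\cat(f)\le\tc(f)\le\cat(f\times f)\le 2\cat(f)$, so $\tc(f)=0\iff\cat(f)=0\iff f\simeq *$.

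Thus the proof I would write is:

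\begin{proof}
By Proposition \ref{cotas}, $\cat(f)\le\tc(f)$, so if $\tc(f)=0$ then $\cat(f)=0$, which means $f$ is homotopically trivial on all of $X$, i.e. $f\simeq *$. Conversely, if $f\simeq *$ then $f\times f\simeq *$ as well, so $\cat(f\times f)\le\cat(f)+\cat(f)=0$; by the second inequality of Proposition \ref{cotas}, $\tc(f)\le\cat(f\times f)=0$.
\end{proof}
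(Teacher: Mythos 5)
Your proof is correct and is exactly the argument the paper intends: it deduces the corollary immediately from Proposition \ref{cotas} together with the inequality $\cat(f\times f)\le 2\cat(f)$ flagged just before the statement, plus the standard fact that $\cat(g)=0$ iff $g\simeq *$. No gaps; this matches the paper's (implicit) proof.
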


  We also prove that for maps $f$ of zero sectional category, its topological complexity coincides with the topological complexity of the codomain:

\begin{proposition}\label{propositionsecat}
 Let $f\colon X\to Y$ be a continuous map admitting a homotopy cross section $s\colon Y\to X$.  Then ${\tc}(f)=\tc(Y)$.
 \end{proposition}
 \begin{proof}
 Suppose that $\tc(Y)=n$, that is, there exists an open cover $\{V_0,V_1,\ldots,V_n\}$ of $Y\times Y$ with maps $\theta_i\colon V_i\to Y^I$ that are homotopy  liftings of the inclusion map $V_i\subseteq Y\times Y$. Let $U_i=(f\times f)^{-1}(V_i)\subseteq X\times X$ and let $\sigma_i\colon U_i\to X^I$ be the composite
 $$
 \sigma_i\colon U_i\stackrel{f\times f|_{U_i}}{\longrightarrow} V_i\stackrel{\theta_i}{\longrightarrow} Y^I\stackrel{s^I}{\longrightarrow} X^I.
 $$
 Then,
 \begin{equation}\label{compu}
 \begin{array}{rcl}
(f\times f)\circ \pi\circ\sigma_i&=&\pi\circ f^I\circ \sigma_i\\&=&\pi\circ f^I\circ s^I\circ \theta_i\circ (f\times f|_{U_i})\\
 &\simeq& \pi\circ \theta_i\circ (f\times f|_{U_i})\\
 &\simeq&f\times f|_{U_i},\\
 \end{array}
\end{equation}
and therefore, ${\tc}(f)\leq \tc(Y)$.

 Conversely suppose that ${\tcn}(f)=n$, that is, there exists an open cover $\{U_0,\ldots,U_n\}$ of $X\times X$ and maps $\sigma_i\colon U_i\to X^I$ such that $(f\times f)\circ \pi\circ \sigma_i\simeq (f\times f)|_{U_i}$. Let $V_i=(s\times s)^{-1}(U_i)\subseteq Y\times Y$ and let
 $$\theta_i=f^I\circ \sigma_i\circ (s\times s|_{V_i})\colon V_i\longrightarrow Y^I.$$ Then $\theta_i$ is a homotopy lifting of the inclusion $V_i\subseteq Y\times Y$. Thus, $\tc(Y)\leq {\tc}(f)$, and hence the result.

 \end{proof}

 This result suggests another upper bound for the topological complexity of $f\colon X\to Y$: Let $n$ be the smallest integer for which $Y\times Y$ admits an open cover $V_0,\dots, V_n$ such that, for each $i$, there are homotopy local sections $\tau_i\colon V_i\to V_i^I$ and $s_i\colon V_i\to X\times X$ of $\pi$ and $f\times f$ respectively. Obviously $\tc(Y)\le n$.

\begin{proposition}\label{anotherupper} $\tc(f)\le n$.
\end{proposition}

\begin{proof}
 For $0\leq i\leq n$ let $U_i=(f\times f)^{-1}(V_i)$ and define
 $$\sigma_i=s_i^I\circ\tau_i\circ (f\times f)\colon U_i\longrightarrow X^I.$$
 An analogous computation to the one in (\ref{compu}) shows that
$ (f\times f)\circ \pi \circ \sigma_i\simeq (f\times f)|_{U_i}$.
\end{proof}
On the other hand, Proposition  \ref{nilpotencia} readily implies:

\begin{proposition}\label{cotasco} For any map $f\colon X\to Y$,
$$
\nil \ker \cup_{|_{\im (f\times f)^*}} \le \tc(f).
$$
\hfill$\square$
\end{proposition}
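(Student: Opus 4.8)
The plan is to specialize Proposition \ref{nilpotencia} to the path fibration and then identify the relevant cohomology kernel. By definition $\tc(f)=\secat_{f\times f}(\pi)$, where $\pi\colon X^I\to X\times X$ is the path fibration. Applying Proposition \ref{nilpotencia} with $p=\pi$ and with the map denoted ``$f$'' there taken to be $f\times f\colon X\times X\to Y\times Y$ gives at once
$$
\tc(f)=\secat_{f\times f}(\pi)\ \ge\ \nil\,{\ker\pi^*}_{|_{\im(f\times f)^*}}.
$$
Thus the only point to address is the identification $\ker\pi^*=\ker\cup$, i.e. that the kernel of $\pi^*$ in reduced cohomology is the kernel of the cup product.

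For this I would use the homotopy equivalence $c\colon X\xrightarrow{\ \simeq\ }X^I$ sending a point to the constant path there (the same $c$ appearing in the definition of $\tc(f)$ in the excerpt), which satisfies $\pi c=\Delta\colon X\to X\times X$. Hence on reduced cohomology $c^*\pi^*=\Delta^*$, and since $c^*$ is an isomorphism we get $\ker\pi^*=\ker\Delta^*$. Under the Künneth identification the reduced diagonal $\Delta^*$ is precisely the cup‑product map $\widetilde H^*(X)\otimes\widetilde H^*(X)\to\widetilde H^*(X)$ (together with the identity on the $a\otimes1$ and $1\otimes a$ summands), so $\ker\Delta^*$ is by definition the ideal of zero‑divisors $\ker\cup$. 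Substituting $\ker\pi^*=\ker\cup$ in the displayed inequality yields the statement.

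I expect no genuine obstacle here: this is a direct corollary, which is why the excerpt says Proposition \ref{nilpotencia} ``readily implies'' it. The only step deserving a word of care is the identification $\ker\pi^*=\ker\cup$; phrasing it via the homotopy equivalence $c$ as above keeps the argument valid over any fixed coefficient ring (the generality in which Proposition \ref{nilpotencia} is stated), while the Künneth reformulation is what justifies writing $\ker\cup$ for the kernel of the reduced diagonal. Either way the proof reduces to this one‑line substitution.
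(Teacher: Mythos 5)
Your proof is correct and follows exactly the route the paper intends: the paper gives no separate argument, simply noting that Proposition \ref{nilpotencia} ``readily implies'' the statement, and your specialization ($p=\pi$, the map being $f\times f$) together with the identification $\ker\pi^*=\ker\Delta^*=\ker\cup$ via the equivalence $c$ is precisely the omitted verification. The care you take about coefficients and the K\"unneth identification is a sensible touch but does not change the argument.
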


\begin{example}\label{ejemplo1}
(1) Obviously  $\tc(f)=0$ for any $f\colon S^n\to S^m$, $n<m$. For $f\colon S^n\to S^n$, $\tc(f)$ is also zero if deg$f=0$. If deg$f>0$, then
$$
\tc(f)=\tc(S^n)=\begin{cases} 1,&\text{if $n$ is odd,}\\
2, &\text{if $n$ is even,}\end{cases}
$$
Indeed, by Proposition \ref{cotas}, if $n$ is odd, $\tc(f)\le \tc(S^n)=1$. On the other hand, by Corollary \ref{trivial},  $\tc(f)$ cannot be zero as $f$ is essential.

If $n$ es even, again by Proposition \ref{cotas}, $\tc(f)\le \tc(S^n)=2$. On the other hand, choose a non zero class $\alpha\in H^n(S^n)$ which is also in the image of $f^*$ and observe that $\gamma=\alpha\otimes1-1\otimes\alpha\in \ker\cup$ and $\gamma^2\not=0$. Hence, by Proposition \ref{cotasco}, $\tc(f)\ge 2$.

(2) Let $f\colon \bc P^n\to \bc P^m$ with $1\le n\le m$. These maps are also classified by the degree. If deg $f=0$, then $\tc(f)=0$. If deg$f>0$, then choose again  a non zero class $\alpha\in H^2(\bc P^n)$ which is also in the image of $f^*$ and observe that $\gamma=\alpha\otimes1-1\otimes\alpha\in \ker\cup$ and $\gamma^{2n}\not=0$. Hence, by Proposition \ref{cotasco}, $\tc(f)\ge 2n$. On the other hand, by Proposition \ref{cotas}, $\tc(f)\le \tc(\bc P^n)=2n$. Hence, $\tc(f)= 2n$.

(3) For any map $f\colon X\to Y$ into a co-$H$-space, $\tc(f)\le 2$. Indeed, co-$H$-spaces have category $1$ and therefore, by Proposition \ref{cotas}, $\tc(f)\le \cat(f\times f)\le 2\cat f \le 2\cat Y=2$.

\end{example}

\noindent
\begin{example} ($2$-link robot arm).
Let $T=S^1\times S^1$ be the configuration
space of a robot  manipulator  of two links of lengths $\ell_1>\ell_2$  and both with one degree of freedom.
 Then, the work map may have two different homotopy types:

 \begin{center}
    \includegraphics[height=54mm]{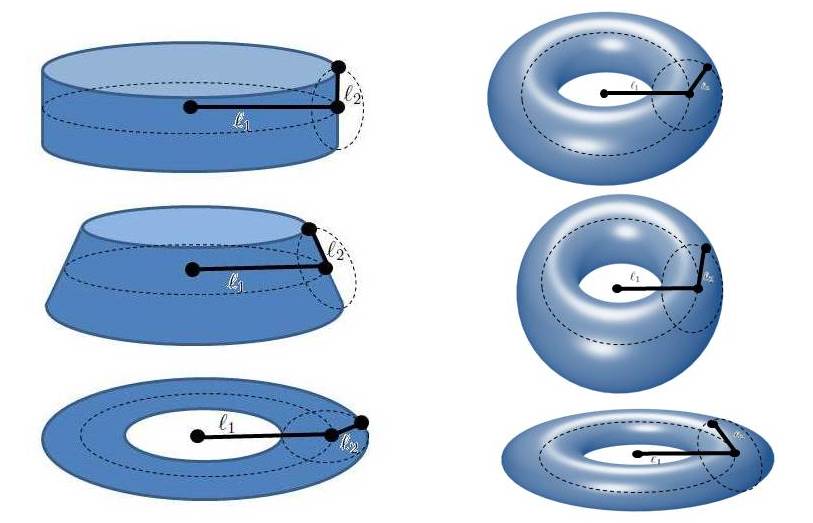}
  \end{center}

Assume that, at each state of the configuration space, the plane defined by the circle generated by the end effector link contains the tangent vector of the circle generated by the base link.
Then, the workspace  is a strip homotopy equivalent to $S^1$ and the work map  is homotopic to the projection $f\colon T\longrightarrow S^1$ over the first factor.

We show that in this case $\tc(f)=1$. Indeed, as $\tc(S^1)=1$ let $\sigma_i\colon V_i\to (S^1)^I$ be local sections of the path fibration $\pi$, $i=1,2$, with $V_1\cup V_2=S^1$. Let $U_i=f^{-1}(V_i)$ and choose a section $s\colon S^1\to C$ of the projection $f$. Define
$
\alpha_i\colon U_i\to T^I$, $\alpha_i=s^I\sigma_i(f\times f)$, for $i=1,2$.
Obviously $(f\times f)\pi\alpha_i\simeq (f\times f)_{|_{U_i}}$.

In the remaining cases, the workspace is a torus and the work map  is homotopic to the identity $\id_{T}$. Hence $\tc(f)=\tc(S^1\times S^1)=2$.

\end{example}

\section{Rational topological complexity of a map}
As rational homotopy theory is particularly fond of algorithms which permit  explicit computations we give in this section a characterization of the  topological complexity of a map in the rational homotopy category. For it, we will be using known results in rational homotopy  for
which the excellent reference \cite{fehatho} is now standard. Here
we simply present a brief summary of some basic facts.  Any space $X$ considered within this section is simply connected and of the homotopy type of a
 CW-complex of finite type. Its rationalization $X_{\Q}$ is
a rational space (its homotopy groups are rational
vector spaces) together with a  map $X\to X_{\Q}$ inducing
isomorphisms in rational homotopy.  On the other hand, to any
space $X$  there corresponds, in a contravariant way, a rational commutative differential graded algebra (cdga henceforth) of the form,
$(\Lambda V,d)$  which  algebraically models the rational
homotopy type of the space $X$, or equivalently, the homotopy
type of its rationalization $X_{\Q}$. By
$\Lambda V$ we mean the free commutative algebra generated by
the graded vector space $V=\oplus_{p\ge 2}V^p$, i.e., $\Lambda V=TV/I$ where
$TV$ denotes the tensor algebra over $V$ and $I$ is the ideal
generated by $v\otimes w-(-1)^{|w||v|}w\otimes v$, $\forall
v,w\in V$, homogeneous elements of degrees $|v|$ and $|w|$
respectively. Moreover, there exists a well ordered basis of $V$ consisting of homogeneous elements $\{v_i\}_{i\in I}$ such that, for each $i$, the differential $dv_i$ is a polynomial on the generators $\{v_j\}_{j<i}$.  The cdga $(\Lambda V,d)$, or simply $\Lambda V$ when the differential is implicitly considered, is called a {\em Sullivan model} of $X$. In general, a {\em model} of $X$ is any cdga connected by quasi-isomorphisms to a Sullivan model of $X$.
This correspondence yields an equivalence between the homotopy
categories of simply connected  rational spaces of finite type and
that of   cdga's  of finite type and also simply connected.

For the rest of the section fix a map $f\colon X\to Y$ and let $\psi\colon (\Lambda V,d)\twoheadrightarrow(\Lambda W,d)$ be a surjective Sullivan model of its rationalization $f_\bq$. denote by $K$ the kernel of the composition
$$
\Lambda V\otimes \Lambda V\stackrel{\psi\otimes \psi}{\longrightarrow}\Lambda W\otimes\Lambda W \stackrel{\mu}{\longrightarrow}\Lambda W
$$
where $\mu$ denotes the multiplication. Then, we prove:

\begin{theorem}\label{racional} $\tc(f_\bq)$ is the least $n$ for which $\psi\otimes\psi$ factors up to homotopy through $(\Lambda V\otimes\Lambda V)/K^{n+1}$.
\end{theorem}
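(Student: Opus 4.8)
The plan is to reduce the statement to the Ganea-type description of the $f$-sectional category (Theorem~\ref{ganeahomo}) and then run it through the cdga dictionary of rational homotopy theory.

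First, unwind the invariant. By Proposition~\ref{nodepende}, $\tc(f_\bq)=\secat_{f_\bq\times f_\bq}(\Delta)$, with $\Delta\colon X_\bq\to X_\bq\times X_\bq$ the diagonal, and $(f_\bq\times f_\bq)\Delta$ equals the composite $\Delta_Y f_\bq\colon X_\bq\to P$, where $P:=Y_\bq\times Y_\bq$ and $\Delta_Y\colon Y_\bq\to P$ is the diagonal. Hence, by Theorem~\ref{ganeahomo} and Remark~\ref{diagrama}, $\tc(f_\bq)$ is the least $n$ for which $f_\bq\times f_\bq\colon X_\bq\times X_\bq\to P$ lifts up to homotopy through the $n$-fold join $*^n_P(\Delta_Y f_\bq)\colon *^n_P X_\bq\to P$. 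Everything thus reduces to identifying a Sullivan model of this join together with its structure map.

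For that, note that $\psi\otimes\psi\colon\Lambda V\otimes\Lambda V\to\Lambda W\otimes\Lambda W$ is a Sullivan model of $f_\bq\times f_\bq$, so $\Lambda V\otimes\Lambda V$ models $P$, $(\Lambda V\otimes\Lambda V)^{\otimes(n+1)}$ models $P^{n+1}$, the $n$-diagonal $\Delta^n\colon P\to P^{n+1}$ is modelled by the iterated multiplication, and a cofibration replacement $X_\bq\hookrightarrow Z\stackrel{\simeq}{\to}P$ of $\Delta_Y f_\bq$ is modelled by the surjection $\mu(\psi\otimes\psi)\colon\Lambda V\otimes\Lambda V\twoheadrightarrow\Lambda W$, whose (differential) kernel is precisely $K$. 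Inserting this into the homotopy pullback square relating $*^n_P X_\bq$, the fat wedge $T^n$ and $P^{n+1}$ (the square displayed just before Theorem~\ref{whitehead}), and using the standard cdga model of the fat wedge, viewed as the polyhedral product~\eqref{poliedro}---namely $(\Lambda V\otimes\Lambda V)^{\otimes(n+1)}$ modulo the ideal $K^{\otimes(n+1)}$---one obtains that $*^n_P X_\bq$ is modelled by the homotopy pushout of cdga's
$$
\bigl((\Lambda V\otimes\Lambda V)^{\otimes(n+1)}\big/K^{\otimes(n+1)}\bigr)\,\otimes^{\mathbb{L}}_{(\Lambda V\otimes\Lambda V)^{\otimes(n+1)}}\,(\Lambda V\otimes\Lambda V).
$$
The crucial point is that this derived tensor product collapses to the ordinary quotient $(\Lambda V\otimes\Lambda V)/K^{n+1}$: the iterated multiplication carries $K^{\otimes(n+1)}$ onto the ideal power $K^{n+1}$, and the relevant higher $\operatorname{Tor}$ groups vanish, so that the pushout is strict; moreover, under this identification the structure map $*^n_P(\Delta_Y f_\bq)$ becomes the canonical projection $q_{n+1}\colon\Lambda V\otimes\Lambda V\to(\Lambda V\otimes\Lambda V)/K^{n+1}$. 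This is the rational computation of the join tower for $f$-sectional category; it parallels the classical computations for rational Lusternik--Schnirelmann category and for the sectional category of a fibration, the only new feature being the replacement of the augmentation ideal by the ideal $K$ attached to $f$ (for $f=\id_X$ one recovers the known description of $\tc(X_\bq)$).

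Finally, since $X$ and $Y$ are simply connected of finite type, so are all the spaces above, and the equivalence between the homotopy categories of simply connected rational spaces of finite type and of simply connected finite-type cdga's turns the homotopy lifting problem ``$f_\bq\times f_\bq$ factors through $*^n_P(\Delta_Y f_\bq)$'' into the homotopy factoring problem ``$\psi\otimes\psi$ factors through $q_{n+1}$'', that is, through $(\Lambda V\otimes\Lambda V)/K^{n+1}$. Together with the first step this gives the asserted description of $\tc(f_\bq)$. The step I expect to be the main obstacle is precisely the collapse of the derived tensor product: producing an explicit relative Sullivan model of the fat wedge along which the homotopy pushout over the diagonal becomes strict, and checking that the higher $\operatorname{Tor}$ terms vanish, so that $*^n_P X_\bq$ is genuinely modelled by $(\Lambda V\otimes\Lambda V)/K^{n+1}$ with structure map $q_{n+1}$. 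The remaining bookkeeping---the variances, the finiteness and simple-connectivity hypotheses, and the passage between ``lift'' and ``factor''---is routine.
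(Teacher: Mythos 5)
Your reduction of $\tc(f_\bq)$ to the Ganea-type lifting problem (Theorem \ref{ganeahomo}) is fine, but the heart of your argument --- the claim that the derived pushout computing a cdga model of the join $*^n_{Y\times Y}X$ collapses, so that $(\Lambda V\otimes\Lambda V)/K^{n+1}$, with the canonical projection as structure map, is a model of $*^n_{Y\times Y}(f\times f)\Delta$ --- is precisely the point that needs proof, and you give none beyond asserting that ``the relevant higher $\operatorname{Tor}$ groups vanish.'' This is not a routine Eilenberg--Moore collapse: the quotient by $K^{\otimes(n+1)}$ is not flat over $(\Lambda V\otimes\Lambda V)^{\otimes(n+1)}$, and no general vanishing statement is available; if the strict quotient always modelled the join tower with the projection as structure map, the elaborate join models of \cite{ferghienkahlvan} and the arguments of \cite{ca} would be unnecessary. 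You yourself flag this as ``the main obstacle,'' and it is exactly where your proof stops being a proof.

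The paper is structured so as to avoid ever making that identification. It proves the two inequalities by two different characterizations, each needing only a weaker input. For the inequality $\tc(f_\bq)\le n$ it uses the Whitehead characterization (Theorem \ref{whitehead}) together with Lemma \ref{lema1} (the F\'elix--Tanr\'e model of the fat wedge map $h_n$ as the quotient by $K^{\otimes(n+1)}$) and the elementary inclusion $\mu_n(K^{\otimes(n+1)})\subset K^{n+1}$: a factorization of $\psi\otimes\psi$ through $(\Lambda V\otimes\Lambda V)/K^{n+1}$ then yields a factorization of $(\psi\otimes\psi)\mu_n$ through the fat-wedge model. For the converse it uses the Ganea characterization together with Lemma \ref{lema2} (an adaptation of \cite{ca}): there is \emph{some} model $k_n\colon\Lambda V\otimes\Lambda V\to C_n$ of the join whose kernel toward $\Lambda W$ satisfies $(\ker q_n)^{n+1}=0$, whence $k_n(K^{n+1})=0$ and $k_n$ factors strictly through the quotient. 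Note that $(\ker q_n)^{n+1}=0$ is much weaker than your claim that $C_n$ is equivalent, under $\Lambda V\otimes\Lambda V$, to $(\Lambda V\otimes\Lambda V)/K^{n+1}$; to make your route work you would essentially have to prove a strengthening of \cite[Lemma 5 and Prop.~7]{ca}, i.e.\ construct an explicit relative Sullivan model of $h_n$ and verify the collapse, which you have not done. As written, this is a genuine gap.
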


In other words, $\tc(f_\bq)$ is the least $n$ for which, decomposing the projection $\Lambda V\otimes\Lambda V\to(\Lambda V\otimes\Lambda V)/K^{n+1}$ as the composition
$$
\xymatrix{ \Lambda V\otimes\Lambda V\ar[rd]\ar@{>->}[r]&\Lambda V\otimes\Lambda V\otimes\Lambda U\ar[d]^\simeq\\
&(\Lambda V\otimes\Lambda V)/K^{n+1}}
$$
of a cdga cofibration (i.e., a Sullivan extension) and a quasi-isomorphism, there exists a cdga morphism $\rho\colon \Lambda V\otimes\Lambda V\otimes\Lambda U\to\Lambda W\otimes \Lambda W$ such that, the following diagram commutes,
$$
\xymatrix{ \Lambda V\otimes\Lambda V\ar@{>->}[rd]\ar[r]^{\psi\otimes\psi}&\Lambda W\otimes\Lambda W\\
&\Lambda V\otimes\Lambda V\otimes\Lambda U\ar[u]_{\rho}.}
$$

\begin{remark}  (i) Standard arguments on classical localization, taking into account that rationalization commutes with homotopy fibration and cofibration sequences, let us assert that
$$
\tc(f_\bq)\le \tc(f).
$$
Hence,  Theorem \ref{racional}  produces an algebraic lower bound for the target topological complexity.

(ii) Observe that in the case $f=\id_X$, we have $\psi=\id_{\Lambda V}$ and Theorem \ref{racional} recovers the characterization of $\tc(X_\bq)$ in \cite[Thm.~2]{ca}.

(iii) For completeness we recall how  to obtain a surjective model of a given map. Let $ A\to B$ any cdga model of $f_\bq$. Choose a surjective cdga morphism $\alpha\colon \Lambda R\twoheadrightarrow B$ and extend the model above to  $\gamma\colon  A\otimes \Lambda R\otimes \Lambda dR\twoheadrightarrow B$ by $\gamma(w)=\alpha(w)$, $\gamma(dw)=d\alpha(w)$, $w\in R$. Then, the inclusion $A\stackrel{\simeq}{\hookrightarrow} A\otimes \Lambda R\otimes \Lambda dR$ is a quasi-isomorphism and thus, $\gamma$ is a surjective model of $f_\bq$.
\end{remark}
 In the proof of Theorem \ref{racional} we will use the following results:

 On the one hand, recall that
 $
 \tc(f)=\secat_{f\times f}(\Delta)$ and let $$h_n\colon T^n(j)\to (Y\times Y)^{ n+1}$$ be the associated map (\ref{hn}) in this particular case. Then:

 \begin{lemma}\label{lema1}{\em \cite[Thm.~8.4.1]{fa}, \cite[Thm.~1]{fetan}} A model of $h_n$ is given by the projection
 $$
 (\Lambda V\otimes\Lambda V)^{\otimes n+1}\longrightarrow (\Lambda V\otimes\Lambda V)^{\otimes n+1}/K^{\otimes n+1}.
 $$
 \hfill$\square$
 \end{lemma}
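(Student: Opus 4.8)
The plan is to recognize Lemma~\ref{lema1} as the rational model of a fat wedge and to read it off from the known Sullivan models of polyhedral products in \cite{fa,fetan}; this is the direct generalization to the pair $(Z,E)$ of the classical fat‑wedge model, the $f=\id$ instance of which is recorded in Remark~(ii). So the argument has three stages: translate the Whitehead factorization defining $h_n$ into algebra, feed the resulting surjective model of the pair into the polyhedral‑product model theorem, and compute the ideal that comes out.

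First I would make explicit the algebra underlying the factorization. Since $\tc(f)=\secat_{f\times f}(\Delta)$, the map being factored in Theorem~\ref{whitehead} is $(f\times f)\Delta\colon X\to Y\times Y$, and its rationalization is modeled by $\mu(\psi\otimes\psi)\colon\Lambda V\otimes\Lambda V\to\Lambda W$: here $\psi\otimes\psi$ models $f\times f$ and $\mu$ is the standard model of the diagonal of $X$, so $\mu(\psi\otimes\psi)$ models $(f\times f)\Delta$. Because $\psi$ is surjective, so is $\mu(\psi\otimes\psi)$, hence it is a \emph{surjective} Sullivan model of $(f\times f)\Delta$, i.e.\ a cdga model of the cofibration $j\colon E\hookrightarrow Z$ of Theorem~\ref{whitehead}, with $\Lambda V\otimes\Lambda V$ a Sullivan model of $Z\simeq Y\times Y$, with $\Lambda W$ a model of $E\simeq X$, and with kernel exactly $K$. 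Thus the pair $(Z,E)$ is modeled by the surjection $\Lambda V\otimes\Lambda V\twoheadrightarrow\Lambda W$ with kernel $K$.

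Now I would invoke the polyhedral product machinery. By \eqref{poliedro}, $T^n(j)=\underline{(Z,E)}^{S^n}$ with $S^n$ the boundary of the standard $n$-simplex; concretely $T^n(j)=\bigcup_{k=0}^n Z^{k}\times E\times Z^{n-k}\subset Z^{n+1}$, and $h_n$ is this inclusion followed by the weak equivalence $Z^{n+1}\stackrel{\simeq}{\to}(Y\times Y)^{n+1}$, which does not affect the model. Put $M=\Lambda V\otimes\Lambda V$. The subspace $Z^{k}\times E\times Z^{n-k}$ is modeled by the quotient $M^{\otimes n+1}\twoheadrightarrow M^{\otimes k}\otimes(M/K)\otimes M^{\otimes n-k}$, with kernel $J_k=M^{\otimes k}\otimes K\otimes M^{\otimes n-k}$; by the Sullivan model of a union of subcomplexes inside a polyhedral product \cite{fa,fetan}, the inclusion $T^n(j)\hookrightarrow Z^{n+1}$ is then modeled by $M^{\otimes n+1}\twoheadrightarrow M^{\otimes n+1}/\bigcap_{k=0}^n J_k$. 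A one‑line computation identifies the intersection: picking a graded vector‑space splitting $M=K\oplus C$ and decomposing $M^{\otimes n+1}=\bigoplus_{S\subseteq\{0,\dots,n\}}K^{\otimes S}\otimes C^{\otimes(\{0,\dots,n\}\setminus S)}$, one sees that $J_k$ is the sum of those summands with $k\in S$, so $\bigcap_{k=0}^n J_k$ is the single summand $S=\{0,\dots,n\}$, namely $K^{\otimes n+1}$. Hence $h_n$ is modeled by $(\Lambda V\otimes\Lambda V)^{\otimes n+1}\twoheadrightarrow(\Lambda V\otimes\Lambda V)^{\otimes n+1}/K^{\otimes n+1}$, which is the assertion.

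The main obstacle is the reliance on \cite{fa,fetan} in the third stage: one has to check that the standing hypotheses of this section (simple connectivity, finite type) place us within the scope of those theorems, be careful that a subspace inclusion is modeled by a \emph{surjection} of cdga's (so that $T^n(j)$ acquires a \emph{quotient} model), and verify that the ideal produced is exactly $\bigcap_k J_k=K^{\otimes n+1}$ and not some larger intersection. A self‑contained alternative, bypassing \cite{fa,fetan}, would build the model of $T^n(j)$ by induction on $n$ from the join construction $*^n_YE$ together with the homotopy pullback square relating $*^n_YE$, $T^n(j)$, $Y\times Y$ and $(Y\times Y)^{n+1}$, using the Sullivan model of a homotopy pushout for the base case $*^1_YE$; this essentially re‑proves the cited results in the relative setting and is longer, so citing them is preferable.
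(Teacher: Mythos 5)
Your proposal is correct and follows the same route as the paper, which proves this lemma simply by appealing to the polyhedral-product/fat-wedge model theorems of \cite{fa} and \cite{fetan}; your extra work (identifying $\mu(\psi\otimes\psi)$ as a surjective model of $(f\times f)\Delta$ with kernel $K$, and computing that the resulting ideal for $\partial\Delta^n$ is exactly $K^{\otimes n+1}$) just makes explicit how those cited results specialize to give the stated projection.
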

On the other hand, consider the diagram of Remark \ref{diagrama} in our particular case,
\begin{equation}\label{paramodelo}
\xymatrix{X \ar[r]^(0.45){\iota_n}\ar[d]_{\Delta} & {*^n_{Y\times Y}X}\ar[d]^{*^n_Y(f\times f)\Delta}\\
X\times X \ar[r]^{f\times f}& {Y\times Y.}}
\end{equation}
Then, a straightforward adaptation to our context of the proof of \cite[Prop.~7]{ca}, based mainly on \cite[Lemma 5]{ca}, gives the following.

\begin{lemma}\label{lema2} There is a model of diagram (\ref{paramodelo})
$$
\xymatrix{{\Lambda V\otimes\Lambda V} \ar[r]^(0.65){k_n}\ar[d]_{\psi\otimes\psi} & {C_n}\ar[d]^{q_n}\\
{\Lambda W\otimes\Lambda W} \ar[r]^(0.60){\mu}& {\Lambda W}}
$$
for which $(\ker q_n)^{n+1}=0$.\hfill$\square$
\end{lemma}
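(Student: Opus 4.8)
The plan is to adapt the proof of \cite[Prop.~7]{ca}, which establishes the analogue of this lemma for $\tc(X_\bq)$, replacing throughout the diagonal $\Delta\colon X\to X\times X$ by the composition $(f\times f)\Delta\colon X\to Y\times Y$; note that a rational model of $(f\times f)\Delta$ is precisely $\mu(\psi\otimes\psi)\colon\Lambda V\otimes\Lambda V\to\Lambda W$, so that $K=\ker\bigl(\mu(\psi\otimes\psi)\bigr)$. Factoring $(f\times f)\Delta$ as $qj$ with $j\colon X\hookrightarrow Z$ a cofibration and $q\colon Z\stackrel{\simeq}{\to} Y\times Y$ an equivalence, one specializes to this situation the homotopy pullback square preceding Theorem \ref{whitehead}:
$$
\xymatrix{{*^n_{Y\times Y}X} \ar[r]^{\eta}\ar[d] & {T^n(j)}\ar[d]^{h_n}\\
Y\times Y \ar[r]^(0.40){\Delta^n}& {(Y\times Y)^{n+1}.}}
$$
Since rationalization carries homotopy pullbacks of spaces to homotopy pushouts of cdga's, a model $C_n$ of $*^n_{Y\times Y}X$ is obtained by computing the corresponding homotopy pushout. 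By Lemma \ref{lema1} a model of $h_n$ is the projection $(\Lambda V\otimes\Lambda V)^{\otimes n+1}\twoheadrightarrow(\Lambda V\otimes\Lambda V)^{\otimes n+1}/K^{\otimes n+1}$, and $\Delta^n$ is modeled by the iterated multiplication $\nu\colon(\Lambda V\otimes\Lambda V)^{\otimes n+1}\to\Lambda V\otimes\Lambda V$; the morphism $k_n\colon\Lambda V\otimes\Lambda V\to C_n$ of the statement is the pushout of $\nu$ along this projection, and models the right-hand vertical arrow of diagram (\ref{paramodelo}).

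The substance of the proof is to show that this homotopy pushout is quasi-isomorphic, as a cdga under $\Lambda V\otimes\Lambda V$, to the quotient $(\Lambda V\otimes\Lambda V)/K^{n+1}$, with $k_n$ the canonical projection. At the level of ordinary pushouts this is immediate, since $\nu$ sends the ideal $K^{\otimes n+1}$ onto $K^{n+1}$; the point requiring work — and it is precisely \cite[Lemma 5]{ca} in the present guise — is that after replacing $\nu$ (equivalently, the fibre inclusion of $(f\times f)\Delta$) by a relative Sullivan model the resulting semifree cdga is still quasi-isomorphic to this quotient. The way to organize this is the inductive construction of the join model of \cite{ca}: one starts from $C_0=\Lambda W$ with $k_0=\mu(\psi\otimes\psi)$, $q_0=\id_{\Lambda W}$ (so $\ker q_0=0$), and at the $m$-th stage one models the fibrewise join $*^m_{Y\times Y}X=(*^{m-1}_{Y\times Y}X)*_{Y\times Y}X$ by a relative Sullivan extension that exhibits $C_m$ as a square-zero extension $0\to N_m\to C_m\to C_{m-1}\to 0$, carrying $k_m$ and $q_m$ with $q_m$ factoring through $q_{m-1}$ and $N_m\cdot\ker q_m=0$. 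For the explicit answer $C_m=(\Lambda V\otimes\Lambda V)/K^{m+1}$ one has $N_m=K^m/K^{m+1}$, $\ker q_m=K/K^{m+1}$, and both conditions are then transparent. I expect this homotopy-pushout identification to be the main obstacle.

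Granting it, the remaining assertions are formal. The morphism $q_n$ is the one induced by $\mu(\psi\otimes\psi)$ on $(\Lambda V\otimes\Lambda V)/K^{n+1}$, well defined since $K^{n+1}\subseteq K=\ker\bigl(\mu(\psi\otimes\psi)\bigr)$; hence $q_n k_n=\mu(\psi\otimes\psi)$, so the cdga square of the statement commutes and, by the pullback square above together with Remark \ref{diagrama}, models diagram (\ref{paramodelo}), with $\ker q_n=K/K^{n+1}$. Finally, the inductive argument of the previous paragraph yields the nilpotency directly: $(\ker q_{m-1})^{m}=0$ forces $(\ker q_m)^{m}\subseteq N_m$ (as $\ker q_m/N_m\cong\ker q_{m-1}$), whence $(\ker q_m)^{m+1}\subseteq N_m\cdot\ker q_m=0$; equivalently, and without induction, a product of $n+1$ elements of $\ker q_n=K/K^{n+1}$ is the image under $k_n$ of a product of $n+1$ elements of $K$, which lies in $K^{n+1}=\ker k_n$; hence $(\ker q_n)^{n+1}=0$.
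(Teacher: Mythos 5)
Your overall route is the paper's: Lemma \ref{lema2} is meant to follow by adapting the join--model construction of \cite[Prop.~7]{ca} and \cite[Lemma 5]{ca}, with the diagonal replaced by $(f\times f)\Delta$, and your identification of $\mu(\psi\otimes\psi)$ as the model of $(f\times f)\Delta$ and of $K$ as its kernel is correct. The genuine gap is that you make the proof rest on the claim that the homotopy pushout $C_n$ modelling $*^n_{Y\times Y}X$ is quasi-isomorphic, as a cdga under $\Lambda V\otimes\Lambda V$, to the naive quotient $(\Lambda V\otimes\Lambda V)/K^{n+1}$, with $k_n$ the projection. That statement is much stronger than the lemma (which asserts only the existence of \emph{some} commutative model with $(\ker q_n)^{n+1}=0$, and says nothing about the cohomology of $C_n$), and it is not what the cited sources supply. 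If it were available, Theorem \ref{racional} would follow at once from Theorem \ref{ganeahomo} alone, the whole Whitehead/fat-wedge half of the argument via Lemma \ref{lema1} would be superfluous, and, taking $f=\id_X$, the characterization of $\tc(X_\bq)$ in \cite{ca} (the Jessup--Murillo--Parent conjecture) would be an essentially immediate consequence of the Ganea--Svarc characterization; the fact that \cite{ca} needs a homotopy-retraction hypothesis, and that the present paper argues both directions separately, reflects precisely that such an identification is not known. Note also that the retraction hypothesis of \cite{ca} is not at your disposal here: $(f\times f)\Delta$ admits no homotopy retraction in general, so only the hypothesis-free part of \cite{ca} transports. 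Finally, the naive pushout computes the homotopy pushout only up to the relevant differential Tor terms, and there is no reason for these to vanish in this generality.

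The good news is that the lemma does not need that identification, and your second thread already contains the intended argument: construct $C_n$ inductively as a cdga join model of $\mu(\psi\otimes\psi)$ with itself, exhibiting at each stage $C_m$ as an extension $0\to N_m\to C_m\to C_{m-1}\to 0$ with $q_m$ factoring through $q_{m-1}$ and $N_m\cdot\ker q_m=0$ --- this is the adaptation of \cite[Lemma 5]{ca}, where the multiplicative vanishing is built into the construction rather than deduced from any comparison with $(\Lambda V\otimes\Lambda V)/K^{m+1}$. Your induction $(\ker q_m)^{m}\subseteq N_m$, hence $(\ker q_m)^{m+1}\subseteq N_m\cdot\ker q_m=0$, then yields $(\ker q_n)^{n+1}=0$ directly, and the strict commutativity $q_nk_n=\mu(\psi\otimes\psi)$, together with Remark \ref{diagrama}, shows the square models diagram (\ref{paramodelo}). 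Excise the quotient identification (and the ``without induction'' alternative ending, which relies on $\ker q_n=K/K^{n+1}$) and what remains is the proof the paper intends.
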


\begin{proof}[Proof of Theorem \ref{racional}]

 Consider the commutative diagram induced by the $n$th multiplication $\mu_n$,
 $$
 \xymatrix{{(\Lambda V\otimes\Lambda V)^{\otimes n+1}} \ar[r]^(0.55){\mu_n}\ar[d] & {\Lambda V\otimes\Lambda V}\ar[d]\\
{ (\Lambda V\otimes\Lambda V)^{\otimes n+1}/K^{\otimes n+1}} \ar[r]^(0.55){\overline\mu_n}& {(\Lambda V\otimes \Lambda V)/K^{n+1},}}
$$
 and assume that $\psi\otimes\psi\colon \Lambda V\otimes \Lambda V\to\Lambda W\otimes\Lambda W$ factors up to homotopy through $(\Lambda V\otimes \Lambda V)/K^{n+1}$, for some $n$.

 Then, the composition,
 $$
 (\Lambda V\otimes \Lambda V)^{\otimes n+1}\stackrel{\mu_n}{\longrightarrow}{\Lambda V\otimes \Lambda V}\stackrel{\psi\otimes\psi}{\longrightarrow}{\Lambda W\otimes \Lambda W}
 $$
 factors up to homotopy through the projection
 $$
 (\Lambda V\otimes\Lambda V)^{\otimes n+1}\longrightarrow (\Lambda V\otimes\Lambda V)^{\otimes n+1}/K^{\otimes n+1}.
 $$
But by Theorem \ref{whitehead} and Lemma \ref{lema1}, $\tc(f_\bq)$ is precisely the least $n$ for which this occurs. Hence $\tc(f_\bq)\le n$.

Conversely, assume that $\tc(f_\bq)=n$ and observe that, by Theorem \ref{ganeahomo} and considering the model in Lemma \ref{lema2}, $\tc(f_\bq)$ is precisely the least $n$ for which $\psi\otimes\psi$ factors up to homotopy through $k_n$. But $k_n(K^{n+1})\subset (\ker q_n)^{n+1}=0$. Hence, $k_n$ factors through $(\Lambda V\otimes\Lambda V)/K^{n+1}$ and therefore, $\psi\otimes\psi$ factors up to homotopy through $(\Lambda V\otimes\Lambda V)/K^{n+1}$.
\end{proof}

As an application we show that the topological complexity of certain rational work maps are given in purely cohomological terms.

Recall that a map $f\colon X\to Y$ is formal if there is a  commutative diagram
\begin{equation}\label{diagrama2}\xymatrix{
 (\Lambda V,d)\ar[r]^{\psi}\ar[d]_(0.43)\alpha^(0,43)\simeq& (\Lambda W,d)\ar[d]_(0.43)\simeq^(0,43)\beta\\
 H^*(\Lambda V,d)\ar[r]_{\psi^*}& H^*(\Lambda W,d),\\
  }
  \end{equation}
in which $\psi$ is a model of $f$. Note that, in particular, $X$ and $Y$ are formal spaces, that is, their rational homotopy type depends only on its rational cohomology. We show that for such maps, the cohomological bound of Proposition \ref{cotasco} is always reached. This generalizes \cite[Thm.~1.2]{lemu}, see also \cite[Cor.~2.2]{jemupa}.

\begin{theorem} For any formal map $f$,
$$
\tc(f_\bq)= \nil \ker \cup_{|_{\im (f\times f)^*}} .
$$
\end{theorem}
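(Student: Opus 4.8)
The plan is to use the rational characterization from Theorem \ref{racional} together with the formality hypothesis to reduce the computation of $\tc(f_\bq)$ to a purely cohomological factorization problem, and then observe that the cohomological lower bound of Proposition \ref{cotasco} forces equality. Write $N=\nil \ker \cup_{|_{\im (f\times f)^*}}$. By Proposition \ref{cotasco} (applied over $\bq$, noting that $(f\times f)^*$ on rational cohomology is the map $H^*(X\times X;\bq)\to$ image, computed by $\psi^*\otimes\psi^*$ through $\mu$), we already have $\tc(f_\bq)\ge N$. So the whole content is the inequality $\tc(f_\bq)\le N$, and by Theorem \ref{racional} this amounts to producing, up to homotopy, a cdga factorization of $\psi\otimes\psi\colon \Lambda V\otimes\Lambda V\to\Lambda W\otimes\Lambda W$ through the quotient $(\Lambda V\otimes\Lambda V)/K^{N+1}$.

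First I would set up the relevant cohomology algebras. Let $H=H^*(\Lambda V,d)=H^*(X;\bq)$ and $H'=H^*(\Lambda W,d)=H^*(Y;\bq)$, and let $\varphi=\psi^*\colon H\to H'$. Since $K=\ker\bigl(\mu(\psi\otimes\psi)\bigr)$, I would first identify the image of $K$ in cohomology: the kernel of the multiplication map $H\otimes H\xrightarrow{\mu(\varphi\otimes\varphi)} H'$ is exactly $\ker(\varphi\otimes\varphi)+\ker\bigl(\mu\colon H'\otimes H'\to H'\bigr)\cdot(\text{stuff})$ — more usefully, letting $J$ denote the ideal $\ker\cup_{|_{\im(f\times f)^*}}$ sitting inside the subalgebra $\im(\varphi\otimes\varphi)\subset H'\otimes H'$, one has $J^{N+1}=0$ by definition of $N$. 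The key algebraic step is then to show that the composite $H\otimes H\xrightarrow{\varphi\otimes\varphi} H'\otimes H'$ kills the $(N+1)$st power of $\ker\bigl(\mu(\varphi\otimes\varphi)\bigr)$, i.e. it factors as a cdga (here: graded algebra) map through $(H\otimes H)/\bigl(\ker\mu(\varphi\otimes\varphi)\bigr)^{N+1}$. This is essentially bookkeeping: the image of $\ker\mu(\varphi\otimes\varphi)$ under $\varphi\otimes\varphi$ lands in $J$ together with the part of the kernel of $\varphi\otimes\varphi$ itself, and raising to the $(N+1)$st power and applying $\varphi\otimes\varphi$ lands inside $J^{N+1}=0$.

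Now I would use formality to transport this to the model level. Diagram (\ref{diagrama2}) says $\psi$ is, up to the quasi-isomorphisms $\alpha,\beta$, identified with $\psi^*=\varphi$. Tensoring diagram (\ref{diagrama2}) with itself gives that $\psi\otimes\psi$ is, up to the quasi-isomorphisms $\alpha\otimes\alpha$ and $\beta\otimes\beta$, identified with $\varphi\otimes\varphi\colon H\otimes H\to H'\otimes H'$. Since $\alpha\otimes\alpha$ is a quasi-isomorphism of cdga's, it induces an isomorphism on the quotients: $K^{N+1}$ (the ideal in $\Lambda V\otimes\Lambda V$) maps onto $\bigl(\ker\mu(\varphi\otimes\varphi)\bigr)^{N+1}$, and one checks $(\Lambda V\otimes\Lambda V)/K^{N+1}\xrightarrow{\simeq} (H\otimes H)/\bigl(\ker\mu(\varphi\otimes\varphi)\bigr)^{N+1}$ is a quasi-isomorphism — this uses that in a Sullivan algebra the powers of the kernel of the projection to cohomology are acyclic in the relevant range, exactly as in the proofs underlying Lemma \ref{lema1} and Lemma \ref{lema2}; alternatively one quotes \cite[Thm.~8.4.1]{fa}/\cite{fetan} directly, since that computes precisely a model of $h_n$ in terms of $K^{n+1}$. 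Combining: the graded-algebra factorization of $\varphi\otimes\varphi$ through $(H\otimes H)/\bigl(\ker\mu(\varphi\otimes\varphi)\bigr)^{N+1}$ obtained in the previous paragraph pulls back, via these quasi-isomorphisms, to a homotopy factorization of $\psi\otimes\psi$ through $(\Lambda V\otimes\Lambda V)/K^{N+1}$. By Theorem \ref{racional} this gives $\tc(f_\bq)\le N$, and with the reverse inequality from Proposition \ref{cotasco} we conclude $\tc(f_\bq)=N$.

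I expect the main obstacle to be the careful verification that the natural projection $(\Lambda V\otimes\Lambda V)/K^{n+1}\to (H\otimes H)/\bigl(\ker\mu(\varphi\otimes\varphi)\bigr)^{n+1}$ really is a quasi-isomorphism — that is, that formality of the map $f$ (not just of $X$ and $Y$ separately) is exactly what makes the powers of the two kernels correspond under cohomology. One must be slightly careful that $\alpha\otimes\alpha\colon \Lambda V\otimes\Lambda V\to H\otimes H$ is only a quasi-isomorphism because $\Lambda V$ is of finite type, and that it carries $K$ into $\ker\mu(\varphi\otimes\varphi)$ surjectively onto a set of algebra generators of that ideal; the filtration-by-powers argument then needs the standard fact (already implicit in Lemma \ref{lema2}, where $(\ker q_n)^{n+1}=0$) that this survives quotienting. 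Everything else is a diagram chase through the formality square and an invocation of results already established in the excerpt.
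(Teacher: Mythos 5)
Your overall strategy is the paper's: the lower bound is Proposition \ref{cotasco}, and for the upper bound you use formality to replace $\psi\otimes\psi$ by $\psi^*\otimes\psi^*$ in cohomology, note that $K=\ker\mu(\psi\otimes\psi)$ is carried into $L=\ker \cup_{|_{\im (f\times f)^*}}$ so that $(N+1)$st powers are killed, and then invoke Theorem \ref{racional}. Up to that point the bookkeeping you describe is correct.

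The gap is in how you assemble the final step. You make the conclusion depend on the claim that the projection $(\Lambda V\otimes\Lambda V)/K^{N+1}\to (H\otimes H)/\bigl(\ker\mu(\varphi\otimes\varphi)\bigr)^{N+1}$ (with $H=H^*(\Lambda V)$, $\varphi=\psi^*$) is a quasi-isomorphism, a claim you yourself flag as the main obstacle and justify only by an appeal to ``acyclicity of powers of the kernel'' and to Lemmas \ref{lema1} and \ref{lema2}. Neither lemma says this: Lemma \ref{lema1} concerns the quotient of $(\Lambda V\otimes\Lambda V)^{\otimes n+1}$ by the \emph{tensor} power $K^{\otimes n+1}$, not the quotient by the ideal power $K^{n+1}$, and Lemma \ref{lema2} produces a different model $C_n$ with $(\ker q_n)^{n+1}=0$; also $\alpha\otimes\alpha$ does not ``induce an isomorphism on the quotients'', only a map. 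Fortunately the claim is not needed at all. Since $(\beta\otimes\beta)(\psi\otimes\psi)=(\psi^*\otimes\psi^*)(\alpha\otimes\alpha)$ sends $K$ into $L$ and $L^{N+1}=0$, this composite kills $K^{N+1}$ and hence factors \emph{strictly} through $(\Lambda V\otimes\Lambda V)/K^{N+1}$ with target $H^*(\Lambda W)\otimes H^*(\Lambda W)$; replacing the projection by the Sullivan extension appearing after Theorem \ref{racional} and lifting along the single quasi-isomorphism $\beta\otimes\beta$ then yields the homotopy factorization of $\psi\otimes\psi$ through $(\Lambda V\otimes\Lambda V)/K^{N+1}$. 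This is exactly the paper's argument; with that repair (i.e.\ deleting the quasi-isomorphism claim and routing the factorization directly into $H^*(\Lambda W)\otimes H^*(\Lambda W)$), your proof goes through.
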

Here, cohomology is considered with rational coefficients.
\begin{proof} In view of Proposition \ref{cotasco} it is enough to show that $$ \tc(f_\bq)\le \nil \ker \cup_{|_{\im (f\times f)^*}}.$$
The diagram (\ref{diagrama2}),  in which we may assume that  $\psi$ is a surjective model of $f$, produces a commutative diagram,
$$
\xymatrix{
 \Lambda V\otimes \Lambda V\ar[r]^{\psi\otimes\psi}\ar[d]_(0.43)\simeq^(0,43){\alpha\otimes\alpha}& \Lambda W \otimes\Lambda W\ar[r]^(0.56){\mu}\ar[d]_(0.43)\simeq^(0,43){\beta\otimes \beta}&\Lambda W\ar[d]_(0.43)\simeq^(0,43)\beta\\
 H^*(\Lambda V)\otimes H^*(\Lambda V)\ar[r]_{\psi^*\otimes\psi^*}& H^*(\Lambda W)\otimes H^*(\Lambda W)\ar[r]_(0.63){\mu^*}& H^*(\Lambda W).\\
  }$$
This induces, for any $n\ge 1$, another commutative diagram
$$\xymatrix{
 \Lambda V\otimes\Lambda V\ar[r]^(0,40){(\beta\otimes\beta)(\psi\otimes\psi)}\ar[d]& H^*(\Lambda W)\otimes H^*(\Lambda W)\ar[d]\\
 (\Lambda V\otimes\Lambda V)/K^n\ar[r]& \bigl(H^*(\Lambda W)\otimes H^*(\Lambda W)\bigr)/L^n\\
  }
  $$
  where
 $K=\ker\mu(\psi\otimes\psi)$ and $L=\ker {\mu^*}_{|_{\im \psi^*\otimes \psi^*}}$, which is in turn identified with $\ker \cup_{|_{\im (f\times f)^*}}$.
 Now, assume $\nil \ker \cup_{|_{\im (f\times f)^*}}=n$. Hence $L^{n+1}=0$ and, for $n+1$, the above diagram becomes
 $$\xymatrix{
 \Lambda V\otimes\Lambda V\ar[r]^(0,40){(\beta\otimes\beta)(\psi\otimes\psi)}\ar[d]& H^*(\Lambda W)\otimes H^*(\Lambda W)\\
 (\Lambda V\otimes\Lambda V)/K^{n+1}\ar[ru]&\\
  }
  $$
 As $\beta\otimes\beta$ is a quasi-isomorphism, this shows that $\psi\otimes\psi$ factors up to homotopy through $(\Lambda V\otimes\Lambda V)/K^{n+1}$. By Theorem \ref{racional} the result follows.

\end{proof}

\begin{example}  Let $f$ be any of the maps considered in Example \ref{ejemplo1}. Such a map is formal and therefore,
$$
\tc(f_\bq)=\tc(f)=\nil \ker \cup_{|_{\im (f\times f)^*}}.
$$
\end{example}

\section{Naive or strict topological complexity of a map}
As we remarked in the Introduction, arising from practical consideration in robotics, it is of interest to consider the naive or strict version of the topological complexity of the work map which we recall here:

 \begin{definition}\label{definition4.1} Given a continuous map $f\colon X\to Y$, the {\em naive topological complexity of $f$}, ${\tcn}(f)$, is the least integer $n\le \infty$ such that $X\times X$ can be covered by $n+1$ open sets $\{U_i\}_{i=0}^n$ on each of which there is a continuous map $\sigma_i\colon U_i\to X^I$ satisfying
$$
 (f\times f)\circ \pi\circ \sigma_i= (f\times f)|_{U_i},
$$
 \end{definition}

 \begin{proposition}\label{proposition4.3}
 Let $f\colon X\to Y$ be a continuous map. Then $\tc(f)\leq {\tcn}(f)$ and they coincide if $f$ is a fibration.
 \end{proposition}

 \begin{proof} The first assertion follows from the definition. For the second simply remark that $\pi$ is a fibration and $f\times f$ is also a fibration whenever $f$ is. In this case  $(f\times f)|_{U_i}$ has a lifting to $(f\times f)\circ \pi$ if and only if it has a homotopy lifting.
\end{proof}

Notice also the following property:

\begin{proposition} For any map $f\colon X\to Y$, $\tcn(f)\le \tc(X)$ and equality holds if $f$ is injective.
\end{proposition}

\begin{proof} The first assertion is trivial. The second is also easy: let $U\subset X\times X$ and let $\sigma\colon U\to X^I$ such that  $(f\times f)\circ \pi\circ \sigma= (f\times f)|_{U}$. Since $f$ is injective,  $\pi\circ \sigma=\id_{U}$ and thus $\tc(X)\le \tcn(f)$.
\end{proof}

\begin{remark} In general,  the gap between $\tcn(f)$ and $\tc(f)$ can be arbitrarily large. Choose the inclusion $k\colon X\hookrightarrow CX$ of a given space into its cone. By Proposition above $\tcn(k)=\tc(X)$ while, in view of Corollary \ref{trivial}, $\tc(k)=0$.
\end{remark}

Observe that the same proof of Proposition \ref{propositionsecat}, replacing homotopy liftings by strict ones, proves the following:
\begin{proposition}\label{proposition4.4}
 Let $f\colon X\to Y$ be a continuous map admitting a section $s\colon Y\to X$.  Then ${\tcn}(f)=\tc(Y)$.\hfill$\square$.
 \end{proposition}

 The strict version of Proposition \ref{anotherupper} reads as follows: Let $n$ be the smallest integer for which $Y\times Y$ admits an open cover $V_0,\dots, V_n$ such that, for each $i$, there are  local sections $\tau_i\colon V_i\to V_i^I$ and $s_i\colon V_i\to X\times X$ of $\pi$ and $f\times f$ respectively.

\begin{proposition}\label{anotheruppern} $\tcn(f)\le n$.\hfill$\square$
\end{proposition}

If $f$ fails to be a fibration, the situation becomes more complicated and further analysis may have to be taken in each particular case. For its particular interest we discuss here the motion planning of a robot arm in $\mathbb{R}^3$, with $k$ joints and where each of the links or subarms may have variable length. We assume that the first joint is fixed at $0\in \R^3$, each joint can rotate with no restrictions with $3$ degress of freedom, and the length $\ell_i$ of each link varies from $a_i$ to $b_i$ with $0<a_i\leq \ell_i\leq b_i$ for $1\leq i\leq k$. We assume $k\ge 2$, or $k=1$ with $a_1<b_1$, and  denote by  $f\colon X\to Y$ the work map associated to this system.

\begin{proposition} (i) The work map $f$ is a submersion.

(ii) If $Y_0\subset Y$ is a sufficiently small open ball then $\tcn(f|_{X_0})=0$ with  $X_0=f^{-1}(Y_0)$.

\end{proposition}

\begin{proof} The configuration space of the system is the compact manifold (with boundary if $a_i< b_i$ for some $i$, that is, if any of the links has variable length),
 $$
 X=\prod_{i=1}^k[a_i,b_i]\times (S^2)^{\times k}.
 $$
The end effector of the robot arm is given by the sum of the vector represented by the links. That is,
$$
Y=\{ \ell_1\overrightarrow{u_1}+\ell_2\overrightarrow{u_2}+\cdots+ \ell_k\overrightarrow{u_k},\,\,a_i\le \ell_i\le b_i,\,\,\overrightarrow{u_i}\in S^2\}\subset \mathbb{R}^3.
$$
It is straightforward to check that the work space $Y$ is either, the closed ball $\overline B(0;r)$ centered at the origin and of radius $r=\sum_{i=1}^k\ell_i$ if $\ell_1\le \sum_{i= 2}^k\ell_i$, or $\overline B(0;r) \setminus B(0;s)$ where $s=\ell_1-\sum_{i=2}^k\ell_i$ otherwise. In any case, the work map is given by
$$
 f(\ell_1,\ldots,\ell_k, \overrightarrow{u_1},\ldots, \overrightarrow{u_k})=\ell_1\overrightarrow{u_1}+\ell_2\overrightarrow{u_2}+\cdots+ \ell_k\overrightarrow{u_k}.
$$
The first assertion follows then by simply taking partial derivatives on $f$. Hence,
 if $Y_0\subseteq Y$ is an open ball with small radius, there is a cross-section $s\colon Y_0\to X_0$ and so  ${\tcn}(f|_{X_0})=0$ by Proposition~\ref{proposition4.4}.

\end{proof}

\begin{remark} (1) Observe that, since $f\times f$ is also a submersion, $Y\times Y$ can be covered by $n+1$ balls (or ``half'' balls) of sufficiently small radius so that $f\times f$  has a local section on each of them. Then, Proposition \ref{anotheruppern} implies that $\tcn(f)\le n$.

(2) It is important to remark that the classical Ereshmann Theorem \cite{e}, by which a proper submersion is a fibration, is not applicable to $f$ in Proposition above. Indeed, $f$ is a proper submersion but between manifolds with boundary. To these manifolds Ereshmann Theorem remains true as long as $f$ preserves the boundary which is not our case. Hence we may not apply Proposition~\ref{proposition4.3} to conclude that, in general, $\tcn(f)=\tc(f)$.

Nevertheless, in some practical situations, and due to constraint conditions on the rotations of the joints and/or on the length of the links, the methodology and statements in~\cite{Meigniez} can be used for checking whether $f|_{X_0}\colon X_0\to Y_0$ is a fibration so that Proposition~\ref{proposition4.3} is applicable. Indeed,
according to~\cite[Theorem A]{Meigniez}, a surjective continuous map $f\colon X\to Y$ is a fibration if and only if it satisfies the following three conditions: it is a homotopic submersion, all vanishing cycles of all dimensions
are trivial, and all emerging cycles of all dimensions are trivial (see op. cit  for explicit definitions of these terms).

\end{remark}

\subsection*{Acknowledgement} We wish to thank Shmuel Weinberger for helpful comments. He proposed the introduction  of the naive version of the topological complexity of a map and its further comparison with the non strict one on the motion planning of a robot arm with $k$ joints.

\end{document}